\tikzstyle{none}=[draw=none]   
\tikzstyle{bigtiparrow}=[->,thick, >=angle 90]
\tikzstyle{bigtiparrow2}=[->,thick, >=angle 90,preaction={draw=white, -,line width=6pt}]
\tikzstyle{lrarrow}=[<->,thick, >=angle 90,preaction={draw=white, -,line width=6pt}]
\tikzstyle{new}=[rectangle,fill=white,draw=white, inner sep=2pt]
\tikzstyle{new2}=[rectangle,fill=white,draw=white, inner sep=6pt]
\DeclareMathOperator{\supp}{\mathrm{supp}}
\newcommand{\MAlg}{\mathrm{MAlg}}
\newcommand{\Aut}{\mathrm{Aut}}
  \newcommand{\R}{\mathbb R}
  \newcommand{\N}{\mathbb N}
  \newcommand{\inv}{^{-1}}
  \renewcommand{\leq}{\leqslant}
  \newcommand{\norm}[1]{\left\lVert #1\right\rVert}
  \newcommand{\impl}{\Rightarrow}
  \newcommand{\onto}{\twoheadrightarrow}
\newtheorem{thm}{Theorem}[section]
\newtheorem{cor}[thm]{Corollary}
\newtheorem{lem}[thm]{Lemma}
\newtheorem{prop}[thm]{Proposition}
\newtheorem*{qu}{Question}
\theoremstyle{definition}
\newtheorem*{claim}{Claim}
\newtheorem{df}[thm]{Definition}
\newtheorem*{rmq}{Remark}
\newtheorem*{ack}{Aknowledgements}
\renewcommand*{\thefootnote}{\fnsymbol{footnote}}
\title{Polish topologies on groups of non-singular transformations}
\author{François Le Maître\footnote{Research partially supported by ANR AGRUME (ANR-17-CE40-0026) and ANR AODynG (19-CE40-0008-01).}}
\begin{document}

\setcounter{footnote}{1}

\maketitle
\renewcommand*{\thefootnote}{\arabic{footnote}}

\begin{abstract}
In this paper, we prove several results concerning Polish group topologies on groups of non-singular transformation. We first prove that the group of measure-preserving transformations of the real line whose support has finite measure carries no Polish group topology. 
 We then characterize the Borel $\sigma$-finite measures $\lambda$ on a standard Borel space for which the group of $\lambda$-preserving transformations has the automatic continuity property.
 We finally show that the natural Polish topology on the group of all non-singular transformations is actually its only Polish group topology. 
\end{abstract}

\section{Introduction}

The study of measure-preserving (or more generally non-singular) transformations on a standard measured space $(Y,\lambda)$ is broadened once one realises that such transformations form a Polish group. Indeed, the Baire category theorem is then available and so the question of generic properties of such transformations arises naturally. 

As a somewhat degenerate case, one may first look at the case where the measure $\lambda$ is completely atomic. Then $\Aut(Y,\lambda)$ only acts by permuting atoms of the same measure and thus splits as a direct product of permutation groups. In the case where all the atoms have the same measure and $\lambda$ is infinite, we get the Polish group $\mathfrak S_\infty$ of permutations of the integers. In this group, the generic permutation has only finite orbits and infinitely many orbits of size $n$ for every $n\in\N$. Such permutations thus form a comeager conjugacy class.

Actually a much stronger property called \textit{ample generics} holds for the Polish group $\mathfrak S_\infty$, and this has several nice consequences as was shown by Kechris and Rosendal \cite{kechrisTurbulenceAmalgamationGeneric2007}, among which the \emph{automatic continuity property}.

\begin{df}
A Polish group $G$ has the \textbf{automatic continuity property} if whenever $\pi:G\to H$ is a group homomorphism taking values in a separable topological group $H$, the homomorphism $\pi$ has to be continuous. 
\end{df}

It is well-known that as soon as $\lambda$ has a non-atomic part, the group $\Aut(Y,\lambda)$ fails to have ample generics. However, it was shown by Ben Yaacov, Berenstein and Melleray that when $\lambda$ is a non-atomic \emph{finite} measure, $\Aut(X,\lambda)$ still has the automatic continuity property \cite{benyaacovPolishTopometricGroups2013}. 
Later on Sabok developed a framework to show automatic continuity for automorphism groups of metric structures \cite{sabokAutomaticContinuityIsometry2019}. In particular, he got another proof of automatic continuity for $\Aut(Y,\lambda)$, and then Malicki simplified his approach \cite{malickiConsequencesExistenceAmple2016}. 
We first observe that this framework can also  be applied when $\lambda$ is infinite.

\begin{thm}
Let $(Y,\lambda)$ be a standard Borel space equipped with a non-atomic $\sigma$-finite infinite measure $\lambda$. Then $\Aut(Y,\lambda)$ has the automatic continuity property.
\end{thm}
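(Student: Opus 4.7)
The plan is to verify that $\Aut(Y,\lambda)$ falls into the framework of Sabok \cite{sabokAutomaticContinuityIsometry2019} and Malicki \cite{malickiConsequencesExistenceAmple2016}, which yields automatic continuity for isometry groups of suitable Polish metric structures.

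First, I would realize $\Aut(Y,\lambda)$ as the isometry group of a natural Polish metric structure built from the measure algebra. The candidate is the space $\MAlg_f(Y,\lambda)$ of classes of measurable sets of finite $\lambda$-measure, equipped with the distance $d(A,B)=\lambda(A\triangle B)$. Since $\lambda$ is non-atomic and $\sigma$-finite, this is a separable complete metric space, unbounded but equipped with a distinguished exhaustion $Y=\bigcup_n Y_n$ with $\lambda(Y_n)<\infty$. The $\Aut(Y,\lambda)$-action on $\MAlg_f(Y,\lambda)$ is isometric, and its pointwise convergence topology coincides with the natural Polish topology on $\Aut(Y,\lambda)$.

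The combinatorial heart of Malicki's criterion is a weak amalgamation property for $\varepsilon$-approximate partial isomorphisms of the structure, combined with a covering of identity neighborhoods by finitely many conjugates of pointwise stabilizers. Here the relevant partial isomorphisms are $\varepsilon$-approximate measure-preserving bijections between two finite sequences $(A_1,\dots,A_n)$ and $(A_1',\dots,A_n')$ of finite-measure sets with nearly matching Venn-diagram measures. Two such partial isomorphisms sharing a common restriction can be glued by realizing the amalgam on the complement of the glued data, using the classical fact that any two atomless standard $\sigma$-finite measure spaces of equal total mass are Borel-isomorphic. The infinite-measure setting in fact \emph{eases} this step: the complement of any finite-measure configuration has infinite measure and hence contains atomless pieces of every finite measure, removing the measure-budget constraint that must be tracked in the finite-measure proof.

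The main technical obstacle is a formulation issue rather than a genuine difficulty: the Sabok--Malicki framework is usually stated for bounded Polish metric structures, whereas $\MAlg_f(Y,\lambda)$ has infinite diameter. The cleanest workaround is to work piecewise along the exhaustion, treating each $\MAlg(Y_n,\lambda|_{Y_n})$ as a bounded Polish metric substructure and using that the pointwise stabilizers of these pieces form a neighborhood basis of the identity in $\Aut(Y,\lambda)$; alternatively, one checks directly that boundedness is never used essentially in Malicki's argument, only the local structure of pointwise stabilizers of finite configurations, which is well-defined in our setting. Either way, the verification amounts to a careful but routine transcription of the finite-measure proof, with the amalgamation step simplified by the infinite complement.
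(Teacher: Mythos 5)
Your proposal follows essentially the same route as the paper: realize $\Aut(Y,\lambda)$ as the automorphism group of the metric structure $(M(Y,\lambda),d_\lambda,\bigtriangleup,\cap)$ of finite-measure sets and verify the Sabok--Malicki hypotheses, with the local checks reducing to the probability-space case already treated in the literature. One caveat: the first of your two workarounds for unboundedness rests on a false claim --- the pointwise stabilizer of $\MAlg(Y_n,\lambda|_{Y_n})$ consists of the transformations supported on $Y\setminus Y_n$, a closed subgroup with empty interior in the weak topology, so these stabilizers do not form a neighborhood basis of the identity. Your second alternative is the right one and is what the paper implicitly does: the hypotheses of the framework (ample and relevant tuples, local finiteness of automorphisms, the extension property) only concern finitely generated substructures, each of which has a unit and is therefore, after rescaling, a substructure of a probability measure algebra, so boundedness of the ambient structure is never actually used.
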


Note that as a concrete example for the above result, one can take $X$ to be the reals equipped with the Lebesgue measure. 
In general, we can actually characterize when $\Aut(X,\lambda)$ has the automatic continuity as follows, where the $\lambda$\emph{-atomic multiplicity} of a real $r>0$ is the number of atoms whose measure is equal to $r$.

\begin{thm}\label{thm:charaauto}
Let $(Y,\lambda)$ be a standard Borel space equipped with a Borel $\sigma$-finite measure $\lambda$. Then the following are equivalent:
\begin{enumerate}[(i)]
\item $\Aut(Y,\lambda)$ has the automatic continuity property;
\item There are only finitely many positive reals whose $\lambda$-atomic multiplicity belongs to $[2,+\infty[$.
\end{enumerate}
\end{thm}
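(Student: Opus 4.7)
The plan is to analyze $\Aut(Y, \lambda)$ via the canonical splitting $\lambda = \lambda_{\text{na}} + \lambda_{\text{a}}$ of the measure into its non-atomic and purely atomic parts. Grouping atoms by their measure value gives a direct product decomposition of Polish groups
\[
\Aut(Y, \lambda) \;\cong\; \Aut(Y_{\text{na}}, \lambda_{\text{na}}) \;\times\; \prod_{r>0} \mathfrak{S}_{n_r},
\]
where $\mathfrak{S}_{n_r}$ denotes the permutation group of the atoms of measure $r$ with its pointwise-convergence topology (discrete when $n_r$ is finite, the usual Polish group $\mathfrak{S}_\infty$ when $n_r = \aleph_0$, trivial when $n_r \leq 1$). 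Checking that this splitting is genuinely topological is a routine first step.

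For $(ii)\Rightarrow(i)$, the hypothesis regroups the right-hand factor as $F \times \prod_{n_r = \infty} \mathfrak{S}_\infty$, where $F$ is a \emph{finite} group (a finite product of finite non-trivial symmetric groups). Thus $\Aut(Y,\lambda) \cong A \times B \times F$ with $A = \Aut(Y_{\text{na}}, \lambda_{\text{na}})$, $B$ a (finite or countable) product of copies of $\mathfrak{S}_\infty$, and $F$ finite. The factor $A$ has AC by Ben Yaacov--Berenstein--Melleray in the finite case and by the preceding theorem in the infinite case (and is trivial if $\lambda_{\text{na}} \equiv 0$); $F$ has AC trivially. For $B$, I would show that ample generics is preserved under countable products: writing $C_k \subseteq \mathfrak{S}_\infty^k$ for the comeager diagonal conjugacy orbit, its product over the coordinates of $B$ is comeager in $B^k$ by a countable Kuratowski--Ulam argument and is itself a single orbit under diagonal conjugation by $B$. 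The Kechris--Rosendal theorem then gives AC of $B$. A finite product of groups with AC has AC, since any homomorphism $\varphi : A \times B \times F \to H$ satisfies $\varphi(a,b,f) = \varphi(a,e,e)\,\varphi(e,b,e)\,\varphi(e,e,f)$, a pointwise product of three factors that are continuous by AC of their respective domains.

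For $(i)\Rightarrow(ii)$, I argue the contrapositive. If infinitely many $r$ have $n_r$ a finite integer $\geq 2$, the coordinate projection yields a continuous surjective homomorphism $\Aut(Y,\lambda) \twoheadrightarrow \prod_{n\in\N} F_n$ with each $F_n$ a non-trivial finite group; AC is preserved by continuous surjections between Polish groups (via the open mapping theorem), so it suffices to construct a discontinuous homomorphism out of $\prod_n F_n$ to a separable group. By pigeonhole some finite group $F$ occurs infinitely often among the $F_n$, and after a further continuous projection we reduce to exhibiting a discontinuous homomorphism $F^\N \to F$. Fix a non-principal ultrafilter $\U$ on $\N$ and define $\pi_\U\colon F^\N \to F$ by $(g_n)\mapsto$ the unique $f\in F$ with $\{n:g_n=f\}\in\U$. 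This is a well-defined surjective group homomorphism whose kernel contains no basic cylinder $\{g : g_1 = \cdots = g_N = e\}$: taking $g_n = f_0 \neq e$ for all $n > N$ gives $\{n:g_n=e\} = \{1,\ldots,N\}$, which is finite hence not in $\U$. Hence $\pi_\U$ is discontinuous. The main obstacle is the countable-product ample-generics argument for $B$: one must verify that every diagonal conjugation orbit in $B^k$ really factorises as a product of coordinate orbits, and that Kuratowski--Ulam transfers comeagerness through countable products of Polish spaces.
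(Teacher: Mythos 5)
Your overall strategy is the same as the paper's: the identical product decomposition into the non-atomic factor, a finite group coming from the finitely many finite multiplicities in $[2,+\infty[$, and a countable product of copies of $\mathfrak S_\infty$; and the same treatment of each factor in (ii)$\Rightarrow$(i) (Ben Yaacov--Berenstein--Melleray or Theorem \ref{thm: AC for infinite measure space} for the non-atomic part, ample generics and Kechris--Rosendal for the infinite-multiplicity factor, and closure of automatic continuity under finite products). The step you flag as the ``main obstacle'' is not one: the product over $j$ of the comeager diagonal conjugacy classes $C_{j,k}\subseteq G_j^k$ contains a countable product of dense $G_\delta$ sets, hence is comeager, and an element of this product is conjugate to any other by a coordinatewise-chosen conjugator, so it is a single diagonal orbit. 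That verification goes through.

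The genuine problem is in (i)$\Rightarrow$(ii). After projecting onto $\prod_n \mathfrak S_{n_{r_n}}$ with each $n_{r_n}\in[2,+\infty[$, you invoke the pigeonhole principle to find a finite group occurring infinitely often among the factors. But these factors range over the countably \emph{infinite} family $\{\mathfrak S_m : m\geq 2\}$: if the multiplicities are, say, $2,3,4,\dots$, then no isomorphism type repeats even twice and the pigeonhole step collapses. The repair is exactly what the paper does: compose each factor with its signature $\mathfrak S_{n_{r_n}}\onto\{\pm1\}$ to get a continuous surjection onto $\{\pm1\}^\N$, after which your ultrafilter construction applies with $F=\{\pm1\}$. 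With that patch, your argument is correct and is in fact a pleasant variant of the paper's: you exhibit one explicit discontinuous homomorphism (the $\U$-limit $F^\N\to F$, discontinuous because its kernel contains no cylinder and a homomorphism to a discrete group is continuous iff its kernel is open), and you transport discontinuity back via the open mapping theorem; the paper instead counts, observing that $\{\pm1\}^\N$ admits $2^{2^{\aleph_0}}$ homomorphisms onto $\{\pm1\}$ while a separable group admits at most $2^{\aleph_0}$ continuous ones. Your reduction lemma --- automatic continuity passes to continuous surjective images of Polish groups --- is correct as stated.
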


Let us now consider the group $\Aut^*(Y,\lambda)$ of non-singular transformations of $(Y,\lambda)$, i.e. the group of Borel bijections which preserve $\lambda$-null sets. 
If $\lambda_{at}$ denotes the atomic part of $\lambda$ and $\lambda_{cont}$ denotes the atomless part, we see that $\Aut^*(Y,\lambda)$ splits as a direct product 
$$\Aut^*(Y,\lambda)=\Aut^*(Y,\lambda_{at})\times\Aut^*(Y,\lambda_{cont}).$$
The group $\Aut(Y,\lambda_{at})$ is a permutation group, so it has the automatic continuity and thus we focus on $\Aut(Y,\lambda_{cont})$, assuming that $\lambda_{cont}$ is non-trivial. Observe that $\lambda_{cont}$ is then equivalent to an atomless probability measure, so we may as well assume $\lambda_{cont}$ is a probability measure. We are thus led to ask:

\begin{qu}
Let $(X,\mu)$ be a standard probability space. Does the group $\Aut^*(X,\mu)$ of all non-singular transformations of $(X,\mu)$ have the automatic continuity property ? 
\end{qu}

The main difficulty with this question is that the framework of Sabok is not available for $\Aut^*(X,\mu)$ because it cannot be the automorphism group of a complete homogeneous metric structure as was recently shown by Ben Yaacov \cite{benyaacovRoelckeprecompactPolishGroup2018}. While we cannot answer this question, we still manage to obtain a basic consequence of automatic continuity, namely having a unique Polish group topology.

\begin{thm}\label{thm:uniqueautstar}
Let $(X,\mu)$ be a standard probability space. The group $\Aut^*(X,\mu)$ has a unique Polish group topology, namely the weak topology. 
\end{thm}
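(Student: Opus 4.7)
I aim to show that if $\tau$ is any Polish group topology on $\Aut^*(X,\mu)$, then $\tau = \tau_w$. By the open mapping theorem for Polish groups (a continuous bijective group homomorphism between Polish groups is a homeomorphism), it is enough to establish that the identity $(\Aut^*(X,\mu), \tau) \to (\Aut^*(X,\mu), \tau_w)$ is continuous.

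The strategy is to transfer the question, via the Maharam extension, to the automatic continuity result of Theorem~1.2 (applied to an infinite $\sigma$-finite non-atomic measure). Define
\[
\Phi : \Aut^*(X,\mu) \to \Aut(X \times \R_+, \mu \otimes \mathrm{Leb}), \qquad \Phi(T)(x,s) = (Tx, \omega_T(Tx)\,s),
\]
where $\omega_T = d(T_*\mu)/d\mu$ is the Radon--Nikodym derivative of $T_*\mu$. A direct cocycle computation shows that $\Phi$ is a well-defined injective group homomorphism, and one verifies by unwinding the definitions that $\Phi : (\Aut^*(X,\mu), \tau_w) \to (\Aut(X \times \R_+, \mu \otimes \mathrm{Leb}), \tau_w)$ is continuous. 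The image of $\Phi$ coincides with the subgroup of elements commuting with every fibrewise dilation $(x,s) \mapsto (x, \lambda s)$, $\lambda > 0$, and is therefore closed. The open mapping theorem then makes $\Phi$ a topological isomorphism of $(\Aut^*(X,\mu), \tau_w)$ onto its closed image.

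Since $\mu \otimes \mathrm{Leb}$ is non-atomic, $\sigma$-finite and infinite on $X \times \R_+$, Theorem~1.2 yields that $\Aut(X \times \R_+, \mu \otimes \mathrm{Leb})$ has the automatic continuity property. Now let $\tau$ be any Polish group topology on $\Aut^*(X,\mu)$. The key remaining step is to show that the abstract group homomorphism
\[
\Phi : (\Aut^*(X,\mu), \tau) \to (\Aut(X \times \R_+, \mu \otimes \mathrm{Leb}), \tau_w)
\]
is continuous; once this is established, composing with the $\tau_w$-continuous $\Phi^{-1}$ on the closed image gives that the identity $(\Aut^*(X,\mu), \tau) \to (\Aut^*(X,\mu), \tau_w)$ is continuous, and the open mapping theorem concludes.

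\emph{Main obstacle.} The crux is this final continuity of $\Phi$ from an arbitrary Polish topology $\tau$. Since automatic continuity of the Maharam target controls only homomorphisms \emph{out of} $\Aut(X \times \R_+, \mu \otimes \mathrm{Leb})$, Theorem~1.2 cannot be invoked directly for a map \emph{into} it. The planned remedy is to combine Theorem~1.2 with the automatic continuity of $(\Aut(X,\mu), \tau_w)$ (Ben Yaacov--Berenstein--Melleray): the latter already forces the inclusion $(\Aut(X,\mu), \tau_w) \hookrightarrow (\Aut^*(X,\mu), \tau)$ to be continuous, so that $\Phi$ is continuous on the measure-preserving subgroup; one then needs a structural description of non-singular transformations---typically a suitable factorisation via the Radon--Nikodym cocycle together with a reference non-singular transformation whose $\tau$-behaviour is controlled by conjugation from $\Aut(X,\mu)$---to propagate continuity from this subgroup to all of $\Aut^*(X,\mu)$.
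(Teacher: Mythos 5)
There is a genuine gap, and you in fact point at it yourself: the entire argument reduces to showing that $\Phi:(\Aut^*(X,\mu),\tau)\to(\Aut(X\times\R_+,\mu\otimes\mathrm{Leb}),w)$ is continuous for an \emph{arbitrary} Polish group topology $\tau$, and this step is never carried out. Automatic continuity of the Maharam target constrains homomorphisms \emph{out of} that group, so it gives nothing here; and the ``planned remedy'' does not close the hole. Knowing that the inclusion $(\Aut(X,\mu),w)\hookrightarrow(\Aut^*(X,\mu),\tau)$ is continuous only tells you that $\tau$ restricted to the measure-preserving subgroup is \emph{coarser} than the weak topology; it does not make $\Phi$ $\tau$-continuous even on that subgroup (for that you would need the reverse comparison, which requires first knowing the subgroup is $\tau$-closed, or some Baire-category argument you have not supplied). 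The final sentence, asking for ``a structural description of non-singular transformations \dots to propagate continuity from this subgroup to all of $\Aut^*(X,\mu)$,'' is precisely the hard part of the theorem, and no mechanism for it is proposed. As written, the proposal is a reduction of the problem to an equally hard problem.

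The paper's proof avoids ever having to show that a homomorphism \emph{into} a nice group is continuous. Instead it works directly with the basic closed sets of the weak topology, $\mathbb B_{n,m,k}=\{T:\mu(T(A_n)\setminus A_m)\leq 2^{-k}\}$ for a countable dense subalgebra $(A_n)$, and shows each is Baire-measurable for any Polish group topology $\tau$. The ingredients are Kallman-style centralizer identities ($\mathcal C(\mathbb G_A)=\mathbb G_{X\setminus A}$, hence $\mathbb G_A$ and the sets $\mathbb G(A,B)=\{T:T(A)\subseteq B\}$ are $\tau$-closed), the automatic continuity of $\Aut(X,\mu)$ used only to conclude that $\mathbb H_A=\mathbb G_A\cap\Aut(X,\mu)$ is $\tau$-Borel, and the factorization $\mathbb B_{n,m,k}=\mathbb H_{X\setminus A_m}\cdot\mathbb G(A_n,B)$, which exhibits $\mathbb B_{n,m,k}$ as an analytic, hence Baire-measurable, set; Pettis' lemma and Lusin--Suslin then finish. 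If you want to salvage your approach, you would need to prove $\tau$-Baire-measurability of enough sets to make $\Phi$ $\tau$-continuous --- at which point you would be redoing essentially this argument, with the Maharam extension adding nothing.
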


The techniques we use to prove the above theorem are quite standard, except for the fact that we use the automatic continuity for $\Aut(X,\mu)$ so as to know that $\Aut(X,\mu)$ is a Borel subgroup  
of $\Aut^*(X,\mu)$ for any Polish group topology on $\Aut^*(X,\mu)$. This trick may be of use for other Polish groups.

Finally, we prove a kind of opposite result by showing that the group of all measure-preserving transformations of the real line which have finite support cannot carry any Polish group topology.\\

The paper is organized in two independent sections. Section \ref{sec: pres} deals with groups of measure-preserving transformations over a $\sigma$-finite space. 
After a preliminary section, we start with the above mentioned absence of Polish group topology on the group of finite support transformations in Section \ref{sec:nopolish}. We then check that $\Aut(Y,\nu)$ has the automatic continuity property in Section \ref{sec:infinite MP has AC}, and  we prove Theorem \ref{thm:charaauto} in Section \ref{sec:pfcharaauto}. Section \ref{sec:nonpres} is finally devoted to the proof of the uniqueness of the Polish group topology of $\Aut^*(X,\mu)$ (Theorem \ref{thm:uniqueautstar}).
\begin{rmq}
	Throughout the paper, we will often neglect what happens on null sets without explicit mention.
\end{rmq}

\begin{ack}
	I would like to thank Bruno Duchesne for useful remarks on a previous version of this paper.
\end{ack}

\section{Groups of transformations preserving a $\sigma$-finite measure}\label{sec: pres}

\subsection{Preliminaries}\label{sec:prelim sigma finite}

A \textbf{standard $\sigma$-finite space} is a standard Borel space equipped with a Borel nonatomic $\sigma$-finite infinite measure measure. All such spaces are isomorphic to $\R$ equipped with the Lebesgue measure, and we fix from now on such a standard $\sigma$-finite space $(Y,\lambda)$. 

The first group we are interested in is denoted by $\Aut(Y,\lambda)$ and consists of all Borel bijections $T:Y\to Y$ which preserve the measure $\lambda$: for all Borel $A\subseteq Y$, we have $\lambda(A)=\lambda(T\inv(A))$. As usual two such bijections are identified if they coincide on a conull set. 

Consider the space $\MAlg_f(Y,\lambda)$ of finite measure Borel subsets of $Y$ equipped with the metric $d_\lambda(A,B):=\lambda(A\bigtriangleup B)$, where we identify $A$ and $B$ if $\lambda(A\bigtriangleup B)=0$. Because $Y$ is standard and $\lambda$ is $\sigma$-finite,  the metric space  $(\MAlg_f(Y,\lambda),d_\lambda)$ is separable. Moreover, the Borel-Cantelli lemma yields that $(\MAlg_f(Y,\lambda),d_\lambda)$ is a complete metric space. 

Now if $(X,\mu)$ is a standard probability space, then every Borel subset has finite measure, and by definition the measure algebra $(\MAlg(X,\mu),d_\mu)$ is defined as its set of Borel subsets up to
measure zero, equipped with the metric $d_\mu(A,B):=\mu(A\bigtriangleup B)$.
If $(Z,\nu)$ is another standard probability space, any isometry between  $(\MAlg(X,\mu),d_\mu)$ and $(\MAlg(Z,\nu),d_\nu)$ sending $\emptyset$ to $\emptyset$ comes from a measure-preserving bijection which is unique up to a null set (see \cite[Sec.~1 (B)]{kechrisGlobalaspectsergodic2010}). Using the $\sigma$-finiteness of $(Y,\nu)$ and the above fact, we easily get the following proposition. 

\begin{prop}\label{prop: Aut(y) as iso group}
	$\Aut(Y,\lambda)$ is equal to the group of isometries of $(\MAlg_f(Y,\lambda),d_\lambda)$ which fix $\emptyset$. 
\end{prop}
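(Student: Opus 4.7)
The inclusion of $\Aut(Y,\lambda)$ into the isometry group fixing $\emptyset$ is immediate: each $T\in\Aut(Y,\lambda)$ induces the bijection $A\mapsto T(A)$ on $M(Y,\lambda)$, fixes $\emptyset$, and preserves $d_\lambda$ because it preserves $\lambda$ and symmetric differences.

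For the converse, let $\Phi$ be a surjective isometry of $(M(Y,\lambda),d_\lambda)$ with $\Phi(\emptyset)=\emptyset$. The plan is to localise to finite-measure pieces and apply the probability-space statement quoted from \cite{kechrisGlobalaspectsergodic2010}. Since $\lambda(A)=d_\lambda(A,\emptyset)$, the map $\Phi$ preserves $\lambda$; combining this with the polarisation-type identity
$$\lambda(A\cap B)=\tfrac12\bigl(\lambda(A)+\lambda(B)-d_\lambda(A,B)\bigr)$$
gives $\lambda(\Phi(A)\cap\Phi(B))=\lambda(A\cap B)$ for all $A,B\in M(Y,\lambda)$. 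A routine verification then shows that $\Phi$ preserves inclusion, unions and intersections on $M(Y,\lambda)$.

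Now fix an increasing sequence $(Y_n)$ of finite-measure Borel subsets with $\bigcup_n Y_n=Y$, and set $Z_n:=\Phi(Y_n)$; by the previous step $(Z_n)$ is increasing with $\lambda(Z_n)=\lambda(Y_n)$. The restriction of $\Phi$ to the set of $A\in M(Y,\lambda)$ with $A\subseteq Y_n$ is a bijective isometry, fixing $\emptyset$, onto the analogous subset sitting below $Z_n$. After normalising $\lambda$ by $1/\lambda(Y_n)$ on both sides we are in the probability-algebra setting, and the cited statement yields a measure-preserving Borel bijection $T_n\colon Y_n\to Z_n$ realising $\Phi$ on subsets of $Y_n$, unique modulo null. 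Uniqueness forces $T_{n+1}$ to restrict to $T_n$, so the $T_n$ paste into a measure-preserving Borel bijection $T\colon Y\to Z$, where $Z:=\bigcup_n Z_n$.

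Two checks remain, and they are precisely the steps specific to the $\sigma$-finite setting. First, $Z=Y$ modulo null: otherwise $Y\setminus Z$ would contain some $B\in M(Y,\lambda)$ of positive measure, and writing $B=\Phi(A)$ by surjectivity of $\Phi$ would give $\lambda(A\cap Y_n)=\lambda(B\cap Z_n)=0$ for all $n$, forcing $\lambda(A)=0$ and contradicting $\lambda(B)=\lambda(A)>0$. Second, for a general $A\in M(Y,\lambda)$ the identity $T(A)=\Phi(A)$ follows by $d_\lambda$-continuity and the decomposition $A=\bigcup_n(A\cap Y_n)$. The only real obstacle is this bookkeeping at the $\sigma$-finite level; the substantive content is the probability-space statement, which does the bulk of the work.
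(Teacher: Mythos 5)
Your proof is correct and follows exactly the route the paper intends: the paper gives no written proof beyond asserting that the quoted probability-space fact "easily yields" the proposition, and your exhaustion by finite-measure sets, reduction to the normalised probability algebras, and the two $\sigma$-finite bookkeeping checks (surjectivity onto $Y$ mod null, and continuity to identify $T$ with $\Phi$) are precisely the omitted details.
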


The above proposition implies that $\Aut(Y,\lambda)$ is a Polish group as it is a closed subgroup of the isometry group of a separable complete metric space. The corresponding topology is called the weak topology; it is thus defined by $T_n\to T$ iff for all $A\subseteq Y$ of finite measure, one has 
$$\lambda(T_n(A)\bigtriangleup T(A))\to 0$$
Note that since $\lambda(T_n(A))=\lambda(T(A))$, this condition is in turn equivalent to $\lambda(T_n(A)\setminus T(A))\to 0$.

For $T\in\Aut(Y,\lambda)$, we define its \textbf{support} to be the following Borel set, which is only well-defined up to measure zero: 
$$\supp T:=\{y\in Y: T(y)\neq y\}.$$
Note that we have the following relation: for all $S,T\in\Aut(Y,\lambda)$, 
$$\supp(STS\inv)=S(\supp T).$$

\begin{df} The group $\Aut_f(Y,\lambda)$ is the normal subgroup of $\Aut(Y,\lambda)$ consisting of all $T\in\Aut(Y,\lambda)$ such that $\lambda(\supp(T))<+\infty$.
\end{df}

\subsection{Absence of Polish group topology on \texorpdfstring{$\Aut_f(Y,\lambda)$}{Autf(Y,lambda)}}\label{sec:nopolish}

\subsubsection{Non-Polishability}

Our first lemma is well-known, we provide a proof for the reader's convenience. 

\begin{lem}\label{lem: approx support} For all $R>0$, the set of $T\in\Aut(Y,\lambda)$ such that $\lambda(\supp T)\leq R$ is closed. 
\end{lem}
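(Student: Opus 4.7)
The plan is to show that for every finite-measure Borel set $A\subseteq Y$, the function $T\mapsto\lambda(A\cap\supp T)$ is lower semi-continuous on $\Aut(Y,\lambda)$. Since $(Y,\lambda)$ is $\sigma$-finite, writing $Y=\bigcup_k A_k$ with $\lambda(A_k)<+\infty$ yields $\lambda(\supp T)=\sup_k\lambda(A_k\cap\supp T)$, a supremum of lower semi-continuous functions, hence itself lower semi-continuous; its sublevel set $\{T:\lambda(\supp T)\leq R\}$ is therefore closed.

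Equivalently, I shall prove that for fixed finite-measure $A$, the map $T\mapsto\lambda(\{y\in A:T(y)=y\})$ is upper semi-continuous. The basic inequality I will exploit is that, for any finite Borel partition $A=A_0\sqcup\cdots\sqcup A_k$,
$$\lambda(\{y\in A:T(y)=y\})\leq\sum_{i=0}^{k}\lambda(A_i\cap T\inv(A_i)),$$
since every fixed point of $T$ lying in $A_i$ automatically lies in $A_i\cap T\inv(A_i)$. The crucial feature is that, for any fixed partition, the right-hand side is weakly continuous in $T$: indeed $\lambda(A_i\cap T\inv(A_i))=\lambda(T(A_i)\cap A_i)$, and by definition of the weak topology $T_n\to T$ forces $\lambda(T_n(A_i)\bigtriangleup T(A_i))\to 0$, hence $\lambda(T_n(A_i)\cap A_i)\to\lambda(T(A_i)\cap A_i)$.

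To upgrade this into upper semi-continuity, I need, for the specific $T$ being tested, a partition that realises equality above. This is the single non-routine step, handled by a classical $3$-colouring argument. Let $B:=\{y\in A:T(y)\neq y\}$, on which $T$ has no fixed points. Consider the Borel graph $G$ on $B$ whose edges are the pairs $\{y,T(y)\}$ with $T(y)\in B$. The neighbours of $y\in B$ in $G$ are at most $T(y)$ and $T\inv(y)$, so $G$ has maximum degree at most $2$; its connected components are Borel paths and cycles, and it therefore admits a Borel proper $3$-colouring $B=B_1\sqcup B_2\sqcup B_3$ satisfying $T(B_j)\cap B_j=\emptyset$ for each $j$. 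Setting $A_0:=\{y\in A:T(y)=y\}$ and $A_j:=B_j$ for $j=1,2,3$ produces a partition of $A$ for which $\sum_i\lambda(A_i\cap T\inv(A_i))=\lambda(A_0\cap T\inv(A_0))=\lambda(A_0)$.

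Putting it together: given $T_n\to T$ weakly with $\lambda(\supp T_n)\leq R$, select the partition above adapted to $T$. Applying the general inequality at $T_n$ and using $\lambda(A\cap\supp T_n)\leq R$ gives $\sum_i\lambda(A_i\cap T_n\inv(A_i))\geq\lambda(A)-R$. Weak continuity at the fixed partition then yields $\lambda(A_0)=\sum_i\lambda(A_i\cap T\inv(A_i))\geq\lambda(A)-R$, so $\lambda(A\cap\supp T)=\lambda(A)-\lambda(A_0)\leq R$, and supremising over finite-measure $A$ closes the argument. I expect the only delicate point to be the existence of the Borel $3$-colouring of the graph of $T$ on $B$; everything else is continuity bookkeeping and the $\sigma$-finite reduction.
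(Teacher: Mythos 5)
Your proof is correct and rests on essentially the same mechanism as the paper's: decompose the support into finitely many disjoint positive-measure pieces that $T$ moves entirely off themselves, then exploit the weak continuity of $T'\mapsto\lambda(T'(A_i)\cap A_i)$. The only differences are in packaging --- you phrase it as upper semicontinuity of $T\mapsto\lambda(\{y\in A: T(y)=y\})$ and obtain the displaced pieces from a Borel $3$-colouring of the displacement graph, whereas the paper argues directly that the complement is open and takes for granted a countable partition of $\supp T$ into displaced sets of positive measure.
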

\begin{proof}
	Take $T$ such that $\lambda(\supp T)>R$, then there exists a partition of $\supp T$ in countably many sets of positive measure $(A_i)_{i\in\N}$ such that for all $i\in\N$, we have $\mu(T(A_i)\cap A_i)=0$. 
	By our hypothesis, we may then find $n\in\N$ such that $\lambda(A_1\sqcup\cdots\sqcup A_n)>R$, and up to shrinking each $A_i$ we may furthermore assume $\lambda(A_1\sqcup\cdots\sqcup A_n)<+\infty$.
	
	Let $\epsilon=\frac 1n(\lambda(A_1\sqcup\cdots\sqcup A_n)-R)$.  Now take  $T'\in\Aut(Y,\lambda)$ such that $\lambda(T(A_i)\bigtriangleup T'(A_i))<\epsilon$ for  $i=1,...,n$,  let $B_i:=A_i\setminus T'(A_i)$. By construction we have $\lambda(B_i)>\lambda(A_i)-\epsilon$. Moreover $T'(B_i)$ is disjoint from $B_i$ so each $B_i$ is contained in the support of $T'$, and since they are disjoint we conclude that the support of $T'$ has measure greater than $\lambda(A_1\sqcup\cdots\sqcup A_n)-n\epsilon>R$. 
\end{proof}

\begin{df}A subgroup $H$ of a Polish group $G$ is called \textbf{Polishable} if it admits a Polish group topology which refines the topology of $G$. 
\end{df}

\begin{thm}\label{thm: not polishable}The subgroup $\Aut_f(Y,\lambda)\leq \Aut(Y,\lambda)$ is not Polishable.
\end{thm}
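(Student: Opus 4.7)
The plan is to assume for contradiction that $\Aut_f(Y,\lambda)$ admits a Polish group topology $\tau$ refining the induced weak topology, and to derive a contradiction by producing a sequence in $\Aut_f$ that $\tau$-converges to the identity while having supports that are too large.

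First I would apply the Baire category theorem. By Lemma \ref{lem: approx support}, for every $n \in \N$ the set $F_n = \{T \in \Aut_f : \lambda(\supp T) \leq n\}$ is closed in the induced weak topology, hence $\tau$-closed (since $\tau$ refines that topology). As $\Aut_f = \bigcup_n F_n$ and $(\Aut_f,\tau)$ is Polish, some $F_{n_0}$ has nonempty $\tau$-interior. Using that $\supp(S^{-1}T) \subseteq \supp S \cup \supp T$ and translating, I obtain a $\tau$-open neighborhood $U$ of $e$ with $U \subseteq F_N$ for $N = 2n_0$.

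The key step is then to import a continuous sequence into $(\Aut_f,\tau)$ from a well-understood subgroup. Choose a Borel set $A \subseteq Y$ with $\lambda(A) > N$, and consider the homomorphism $\iota_A : \Aut(A, \lambda|_A) \to \Aut_f$ that extends each transformation by the identity on $Y \setminus A$. Since $(A,\lambda|_A)$ is a non-atomic finite measure space, $\Aut(A,\lambda|_A)$ has the automatic continuity property by Ben Yaacov-Berenstein-Melleray \cite{benyaacovPolishTopometricGroups2013}. The target $(\Aut_f,\tau)$ is a Polish, hence separable, group, so $\iota_A$ is continuous.

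Finally, identifying $A$ with the circle $\R/\lambda(A)\Z$, the rotations $R_k$ by $1/k$ satisfy $R_k \to e$ weakly in $\Aut(A,\lambda|_A)$, while $\supp R_k = A$ (up to a null set), so $\lambda(\supp \iota_A(R_k)) = \lambda(A) > N$ for every $k$. By continuity of $\iota_A$, the sequence $\iota_A(R_k)$ $\tau$-converges to $e$, so eventually $\iota_A(R_k) \in U \subseteq F_N$, contradicting $\lambda(\supp\iota_A(R_k))>N$. The main conceptual obstacle is that $\tau$ is a priori completely arbitrary, so we have no direct way to exhibit $\tau$-convergent sequences; the trick is to bypass this by using automatic continuity of $\Aut(A,\lambda|_A)$ to produce such sequences from weakly convergent ones in a subgroup with large support.
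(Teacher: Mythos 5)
Your proof is correct, but after the common Baire-category opening it diverges from the paper's argument in the key step. The paper stays elementary: from the fact that some $F_n$ has nonempty interior it deduces, via second countability, that $\Aut_f(Y,\lambda)$ contains a countable set that is $n$-dense for the uniform metric $d_\lambda(T,T')=\lambda(\{x:T(x)\neq T'(x)\})$, and then contradicts this with an uncountable family of circle rotations supported on a set of measure $3n$, any two of which are $3n$-apart --- a pure cardinality/pigeonhole argument needing no external theorem. You instead shrink $F_{n_0}$ to a $\tau$-neighborhood $U\subseteq F_N$ of the identity and invoke the Ben Yaacov--Berenstein--Melleray automatic continuity theorem for $\Aut(A,\lambda|_A)$ to make the inclusion $\iota_A$ weak-to-$\tau$ continuous, so that the weakly null sequence of full-support rotations $R_k$ lands in $U$ and violates the support bound. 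Both routes are sound (and both ultimately exploit rotations of a circle of large finite measure); the paper's is self-contained and works for any refining Hausdorff topology argumentwise, while yours imports a substantial theorem but is conceptually close to the trick the paper itself uses later, in Section \ref{sec:nonpres}, where automatic continuity of $\Aut(X,\mu)$ is used to control an arbitrary Polish topology on $\Aut^*(X,\mu)$. One small point worth making explicit in your write-up: the passage from ``$F_{n_0}$ has nonempty interior'' to ``$U\subseteq F_{2n_0}$ is a neighborhood of $e$'' uses that $V\subseteq F_{n_0}$ forces $T_0\in F_{n_0}$ for the chosen interior point $T_0$, so that $\supp(T_0^{-1}T)\subseteq\supp T_0\cup\supp T$ has measure at most $2n_0$; you indicate this, and it is fine.
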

\begin{proof}
	Suppose that $\Aut_f(Y,\lambda)$ is Polishable. Then by definition its Polish group  topology $\tau$ refines the weak topology. For each $n\in\N$, let $$F_n:=\{T\in\Aut_f(Y,\mu): \lambda(\supp T)\leq n\}.$$ By the previous lemma, each $F_n$ is closed in $\Aut_f(Y,\lambda)$. Since $\Aut_f(Y,\lambda)=\bigcup_{n\in\N} F_n$, the Baire category theorem yields that there is $n\in\N$ such that $F_n$ has nonempty interior. Since $\tau$ is second-countable, we deduce that $\Aut_f(Y,\lambda)$ is covered by countably many $F_n$-translates. This means that $\Aut_f(Y,\lambda)$ contains a countable set which is $n$-dense for the metric $d_\lambda$ given by 
	$$d_\lambda(T,T'):=\lambda(\{x\in Y: T(x)\neq T'(x)\}).$$
	Let us explain why this cannot happen.
	
	Fix a Borel set $A\subseteq Y$ of measure $3n$, and identify $A$ with the circle $\mathbb S^1$ equipped with the finite measure $3n\lambda$, where $\lambda$ is the Haar measure on $\mathbb S^1$. Take $z\in\mathbb S^1$ and consider $T_z$ the translation by $z$ in $\mathbb S^1$, which we can see through our identification  as a measure preserving transformation of $(Y,\lambda)$ supported on $A$. Observe that for $z\neq z'$, we have $d_\lambda(T_z,T_{z'})=3n$. So in $\Aut_f(Y,\lambda)$ there is an uncountable subgroup all whose distinct elements are $3n$-apart for the metric $d_\lambda$, contradicting the fact that  $\Aut_f(Y,\lambda)$ contains a countable set which is $n$-dense for the metric $d_\lambda$ by the pigeonhole principle.
\end{proof}

\subsubsection{Non-existence of a Polish group topology}

We now upgrade the previous theorem to see that $G\coloneqq\Aut_f(Y,\lambda)$ cannot carry a Polish group topology. Fortunately, the arguments we need were carried out by Kallman in \cite{kallmanUniquenessResultsGroups1985} to prove the uniqueness of the Polish topology of $\Aut(Y,\lambda)$. We reproduce them here for the convenience of the reader. 

For a Borel subset $A\subseteq Y$ we let $$G_{A}:=\{T\in \Aut_f(Y,\lambda): \supp T\subseteq A\}.$$
For a subset $F\subseteq \Aut_f(Y,\lambda)$ we let $$\mathcal C(F):=\{U\in\Aut_f(Y,\lambda): TU=UT \text{ for all }T\in F\}$$ denote its centraliser. 

\begin{lem}\label{lem: centralizer pres}We have $\mathcal C(G_A)=G_{Y\setminus A}$. 
\end{lem}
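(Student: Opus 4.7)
My plan is to prove the two inclusions of $\mathcal C(G_A)=G_{Y\setminus A}$ separately. The inclusion $G_{Y\setminus A}\subseteq \mathcal C(G_A)$ is the elementary ``disjoint supports commute'' direction: given $U\in G_{Y\setminus A}$ and $T\in G_A$, the supports $\supp U$ and $\supp T$ are disjoint (mod null). Any $S\in \Aut(Y,\lambda)$ sends $\supp S$ onto itself, since if $S(y)\neq y$ but $S(S(y))=S(y)$ one would deduce $S(y)=y$ by applying $S\inv$. Hence both $T$ and $U$ act as the identity outside their own supports, and a case analysis on whether $y\in\supp T$, $y\in\supp U$, or $y$ lies in the common complement yields $TU(y)=UT(y)$ almost everywhere.

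For the reverse inclusion, let $U\in \mathcal C(G_A)$. From $UTU\inv=T$ for every $T\in G_A$, together with the general identity $\supp(UTU\inv)=U(\supp T)$ recorded in the preliminaries, $U$ preserves $\supp T$ for every $T\in G_A$. The key observation is that every finite-measure Borel subset $B\subseteq A$ is realized as $\supp T$ for some $T\in G_A$: since $B$ equipped with $\lambda|_B$ is a standard finite measure space, it carries a fixed-point-free measure-preserving involution, which extended by the identity off $B$ gives the desired $T\in G_A$. Consequently $U(B)=B$ modulo a null set for every finite-measure Borel $B\subseteq A$.

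To conclude that $U$ is the identity on $A$, I will fix a countable family $(B_n)$ of finite-measure Borel subsets of $A$ that separates the points of $A$; such a family is readily obtained by intersecting a countable basis of $Y$ (as a standard Borel space) with the pieces of a $\sigma$-finite exhaustion of $Y$, and then intersecting with $A$. For each $n$ the symmetric difference $B_n\bigtriangleup U\inv(B_n)$ is null, hence so is their union $N$, and at every $y\in A\setminus N$ the equivalence $y\in B_n\Leftrightarrow U(y)\in B_n$ for all $n$ forces $U(y)=y$. Therefore $\supp U\subseteq Y\setminus A$, i.e.\ $U\in G_{Y\setminus A}$. The only mildly delicate step is the realization of arbitrary finite-measure Borel subsets of $A$ as supports inside $G_A$; everything else is routine measure-algebraic bookkeeping.
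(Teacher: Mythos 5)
Your proof is correct, but it takes a genuinely different route from the paper's. The paper argues by contrapositive with a single witness: if $T\notin G_{Y\setminus A}$ then some positive-measure $B\subseteq A$ satisfies $T(B)\cap B=\emptyset$, and any nontrivial element $V$ supported in $B$ fails to commute with $T$ (since $TVT\inv$ is supported in $T(B)$, disjoint from $\supp V$), so $T\notin\mathcal C(G_A)$. You instead argue directly: from $\supp(UTU\inv)=U(\supp T)$ a centralizing $U$ must fix (setwise, mod null) every set arising as a support, you realize every finite-measure Borel subset of $A$ as the support of an involution in $G_A$, and you then upgrade setwise invariance of a countable separating family to the pointwise conclusion $U=\mathrm{id}$ a.e.\ on $A$. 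Both hinge on the same underlying fact --- any finite positive-measure subset of $A$ carries a nontrivial element of $G_A$ --- but the paper's version is a two-line argument needing only one non-commuting witness, whereas yours proves the stronger intermediate statement that elements of $\mathcal C(G_A)$ fix every finite-measure subset of $A$, at the cost of the separating-family bookkeeping at the end. One small point to tidy in your last step: to deduce $U(y)=y$ from ``$y\in B_n\Leftrightarrow U(y)\in B_n$ for all $n$'' you also need $U(y)\in A$ for a.e.\ $y\in A$; this follows since your family covers $A$ (or since $U(C)=C$ for the pieces of a finite-measure exhaustion of $A$), but it should be said.
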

\begin{proof}
	We clearly have $G_{Y\setminus A}\leq \mathcal C(G_A)$.
	
	Take $T\not\in G_{Y\setminus A}$. Then there exists $B\subseteq A$ with $T(B)$ disjoint from $B$. But clearly $T$ does not commute with an element of $\Aut_f(Y,\lambda)$ supported in $B$, in particular $T\not \in C(G_A)$.
\end{proof}
By this lemma, whenever $\tau$ is a Hausdorff group topology on $\Aut_f(Y,\lambda)$ the set $G_{Y\setminus A}$ is $\tau$-closed. 
Moreover for all $T\in\Aut_f(Y,\lambda)$ and all $A\subseteq Y$, we have \begin{equation}\label{eq:conjugacy}
G_{T(A)}=TG_AT\inv.
\end{equation}
Denote by $G(A,B)$ the set of $T\in\Aut_f(Y,\lambda)$ such that $T(A)\subseteq B$. 
\begin{lem}\label{lem:G(A,B) closed pres}
	Let $\tau$ be a Hausdorff group topology on $G=\Aut_f(Y,Y)$.
	For all $A,B\subseteq Y$, the set $G(A,B)$ is $\tau$-closed.
\end{lem}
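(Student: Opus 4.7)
The plan is to express the condition $T(A)\subseteq B$ as a statement about subgroup inclusions of the form $G_C$, so that I can exploit the fact (consequence of Lemma \ref{lem: centralizer pres} plus $\tau$-continuity of centralizers) that every $G_{Y\setminus C}$ is $\tau$-closed. Concretely, since $T$ is a bijection we have $T(Y\setminus A)=Y\setminus T(A)$, so $T(A)\subseteq B$ is equivalent (modulo null sets) to $Y\setminus B\subseteq T(Y\setminus A)$. Because a set $C\subseteq Y$ is determined up to null sets by the subgroup $G_C$ (indeed $G_C\subseteq G_D$ iff $C\subseteq D$ modulo null sets, using that for any positive-measure subset one can exhibit a nontrivial element of $\Aut_f$ supported there), this is in turn equivalent to $G_{Y\setminus B}\subseteq G_{T(Y\setminus A)}$. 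Using the conjugacy identity \eqref{eq:conjugacy}, $G_{T(Y\setminus A)}=TG_{Y\setminus A}T\inv$, so
\bal{
T\in G(A,B)\ssi G_{Y\setminus B}\subseteq TG_{Y\setminus A}T\inv\ssi\forall U\in G_{Y\setminus B},\ T\inv UT\in G_{Y\setminus A}.
}

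From here the proof of $\tau$-closedness is immediate. For each fixed $U\in G_{Y\setminus B}$, the map $T\mapsto T\inv UT$ is $\tau$-continuous (since $\tau$ is a group topology, so inversion and multiplication are continuous); and $G_{Y\setminus A}$ is $\tau$-closed by Lemma \ref{lem: centralizer pres} applied to the Hausdorff group topology $\tau$ (a centraliser being the intersection of the continuous preimages of $\{e\}$ under the commutator maps $T\mapsto [T,S]$, which are closed since $\tau$ is Hausdorff). Therefore
\bal{
G(A,B)=\bigcap_{U\in G_{Y\setminus B}}\{T\in G:T\inv UT\in G_{Y\setminus A}\}
}
is an intersection of $\tau$-closed sets, hence $\tau$-closed.

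The only point that requires a little care is justifying the equivalence $G_{Y\setminus B}\subseteq G_{T(Y\setminus A)}\ssi Y\setminus B\subseteq T(Y\setminus A)$, i.e.\ that the assignment $C\mapsto G_C$ is injective modulo null sets on measurable subsets. The nontrivial direction follows from the richness of $\Aut_f(Y,\lambda)$: if $\lambda((Y\setminus B)\setminus T(Y\setminus A))>0$, take any nontrivial element of $\Aut_f(Y,\lambda)$ supported on this positive-measure set (e.g.\ a finite-measure involution swapping two disjoint positive-measure subsets), which then lies in $G_{Y\setminus B}\setminus G_{T(Y\setminus A)}$. This is the only genuinely measure-theoretic input; everything else is formal manipulation with the topological group structure, so I expect no real obstacle.
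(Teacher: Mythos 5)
Your proof is correct and is essentially the paper's argument: both reduce $T\in G(A,B)$ to a family of commutation relations by combining the characterization ``$C\subseteq D$ iff $G_C\leq G_D$'' (proved the same way, via a nontrivial element supported on a positive-measure difference) with the conjugation identity and the fact that centralisers are closed in a Hausdorff group topology. The only cosmetic difference is that you pass to complements and write the condition as $G_{Y\setminus B}\subseteq TG_{Y\setminus A}T\inv$ rather than $TG_AT\inv\subseteq G_B$; after unwinding, the resulting closed conditions coincide up to relabelling.
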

\begin{proof}
	Observe first that $A\subseteq B$ if and only if $G_A\leq G_B$: the direct implication is clear, conversely if $A$
	is not a subset of $B$ then we find a transformation supported on $A\setminus B$, thus witnessing
	that $G_A\not\leq G_B$.
	By equality (\ref{eq:conjugacy}), we then have $G(A,B)=\{T\in\Aut(Y,\nu): TG_AT\inv \subseteq G_{B}\}$. 
	So by the previous lemma $G(A,B)$ is the set of all $T\in\Aut(Y,\nu)$ such that for all $U\in G_A$, $TUT\inv$ commutes with every element of $G_{X\setminus B}$. This is clearly a $\tau$-closed condition.
\end{proof}

Now take $A\subseteq Y$, let $\epsilon>0$, and pick $B\subseteq Y$ containing $A$ such that $\lambda(B\setminus A)=\epsilon$. 

\begin{lem}$G_{Y\setminus A}\cdot G(A,B)=\{T\in\Aut_f(Y,\lambda): \lambda(T(A)\setminus A)\leq \epsilon\}$. 
\end{lem}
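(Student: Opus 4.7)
The proof naturally splits into the two inclusions, and the forward inclusion is essentially a computation using that $U \in G_{Y\setminus A}$ fixes $A$ pointwise (and therefore setwise preserves $Y\setminus A$). Writing $T=UV$ with $V(A)\subseteq B$, I would split $V(A) = (V(A)\cap A)\sqcup(V(A)\setminus A)$, note that $V(A)\setminus A \subseteq B\setminus A$ has measure at most $\epsilon$, and apply $U$ to obtain
\[
T(A)\setminus A = U(V(A))\setminus A = U\bigl(V(A)\setminus A\bigr),
\]
the last equality because $U(V(A)\cap A) = V(A)\cap A \subseteq A$ while $U(V(A)\setminus A)\subseteq Y\setminus A$. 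Taking $\lambda$-measures gives $\lambda(T(A)\setminus A)\leq \epsilon$.

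For the reverse inclusion, given $T\in\Aut_f(Y,\lambda)$ with $\lambda(T(A)\setminus A)\leq\epsilon$, I would set $C := T(A)\setminus A$ and try to write $T=UV$ by first constructing a suitable $U\in G_{Y\setminus A}$; then $V := U^{-1}T$ will automatically lie in $\Aut_f(Y,\lambda)$, and the condition $V(A)\subseteq B$ becomes $U^{-1}(C)\subseteq B\setminus A$ (using that $U^{-1}$ fixes $A$ pointwise and preserves $Y\setminus A$, while $C\subseteq Y\setminus A$). The candidate $U$ is a measure-preserving involution that swaps the ``bad part'' $C\setminus B$ of $C$ with a suitably chosen subset $D'$ of the ``free space'' $(B\setminus A)\setminus C$, leaving everything else fixed.

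The key measure-theoretic check is that there is enough room inside $(B\setminus A)\setminus C$ to accommodate $C\setminus B$, i.e.\ $\lambda((B\setminus A)\setminus C)\geq \lambda(C\setminus B)$. This is a short computation:
\[
\lambda((B\setminus A)\setminus C) = \epsilon - \lambda(C\cap(B\setminus A)) \geq \lambda(C) - \lambda(C\cap(B\setminus A)) = \lambda(C\setminus B),
\]
using the hypothesis $\lambda(C)\leq \epsilon = \lambda(B\setminus A)$. Pick such a $D'$ of measure equal to $\lambda(C\setminus B)$, and let $U$ swap $C\setminus B$ with $D'$ via some measure isomorphism. Then $\supp U\subseteq (C\setminus B)\cup D' \subseteq Y\setminus A$ has finite measure at most $2\epsilon$, so $U\in G_{Y\setminus A}\cap\Aut_f(Y,\lambda)$. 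A direct check (noting that $U$ fixes $C\cap B$ since this set is disjoint from both $C\setminus B$ and $D'$) yields $U(C) = (C\cap B)\cup D'\subseteq B\setminus A$, hence $U^{-1}(C) = U(C)\subseteq B\setminus A$ as required.

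I expect the main conceptual obstacle to be organizing the set-theoretic bookkeeping cleanly: one has to verify simultaneously that the swap $U$ has support in $Y\setminus A$, has finite measure (to lie in $\Aut_f$), genuinely sends all of $C$ into $B\setminus A$, and does not collide with the part of $C$ already inside $B$. Once these are lined up, the rest is mechanical.
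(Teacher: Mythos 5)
Your proposal is correct and follows essentially the same route as the paper: one inclusion via the observation that left-multiplication by elements of $G_{Y\setminus A}$ preserves the condition $\lambda(T(A)\setminus A)\leq\epsilon$ together with $G(A,B)\subseteq F$, and the other by producing a finitely supported $U\in G_{Y\setminus A}$ moving $T(A)\setminus A$ into $B\setminus A$. The only difference is that you spell out the explicit involution swapping $C\setminus B$ with a subset of $(B\setminus A)\setminus C$, where the paper simply invokes the existence of such a transformation; your measure count justifying this is correct.
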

\begin{proof}[Proof of claim]
	Note that $G_{Y\setminus {A}}$ is a group, and that the set $F:=\{T\in\Aut_f(Y,\lambda): \lambda(T(A)\setminus A)\leq \epsilon\}$ is left $G_{Y\setminus {A}}$-invariant. Moreover since $\lambda(B\setminus A)=\epsilon$ we clearly have $\mathbb G(A,B)\subseteq F$ so $G_{Y\setminus A_m}\cdot G(A_n,B)\subseteq F$.
	
	For the reverse inclusion, take $T\in F$. Since $\mu(T(A)\setminus A)\leq \epsilon$ and $\mu(B\setminus A_m)=\epsilon$ we may find $U\in G_{Y\setminus A_m}$ such that $U(T(A)\setminus A)\subseteq B\setminus A$. We conclude that $UT\in G(A,B)$ so $T\in G_{Y\setminus A}\cdot G(A,B)$.
\end{proof}

The above lemma implies that if $\tau$ is a Polish group topology on $\Aut_f(Y,\lambda)$, then for all $A\subseteq Y$ and $\epsilon>0$, the set $\{T\in\Aut_f(Y,\lambda): \lambda(T(A)\setminus A)\leq \epsilon\}$ is analytic (it is the pointwise product of two closed sets) hence Baire-measurable. 
It easily follows that the inclusion map $\Aut_f(Y,\lambda)\to \Aut(Y,\lambda)$ is Baire-measurable, hence continuous by Pettis' lemma (see e.g. \cite[Thm.~2.3.2]{gaoInvariantDescriptiveSet2009}). But this is impossible by Theorem \ref{thm: not polishable}. This proves the following result.

\begin{thm}
	The group $\Aut_f(Y,\lambda)$ cannot carry a Polish group topology.
\end{thm}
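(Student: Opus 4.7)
The plan is to leverage Theorem \ref{thm: not polishable}: if I can show that any Polish group topology $\tau$ on $G := \Aut_f(Y,\lambda)$ must automatically refine the weak topology, then no such $\tau$ can exist.

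Suppose toward contradiction that $\tau$ is a Polish group topology on $G$. By Lemma \ref{lem: centralizer pres}, for any Borel $A \subseteq Y$ the subgroup $G_{Y \setminus A}$ is the centraliser of $G_A$, so it is $\tau$-closed (being a centraliser in a Hausdorff topological group). Combined with the conjugacy identity (\ref{eq:conjugacy}), Lemma \ref{lem:G(A,B) closed pres} then yields that the sets $G(A,B)$ are $\tau$-closed for all Borel $A,B \subseteq Y$. Using the last lemma, which expresses $\{T \in G : \lambda(T(A) \setminus A) \leq \epsilon\}$ as the pointwise product $G_{Y \setminus A} \cdot G(A,B)$ for $B \supseteq A$ with $\lambda(B \setminus A) = \epsilon$, I conclude that this set is the continuous image of the product of two $\tau$-closed sets, hence $\tau$-analytic. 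In particular, it has the property of Baire with respect to $\tau$.

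Next, I would observe that the weak topology on $\Aut(Y,\lambda)$ is generated (after a left-translation, which preserves Baire-measurability in both topologies) by the subbasic sets $\{T : \lambda(T(A) \setminus A) < \epsilon\}$ as $A$ ranges over finite-measure Borel subsets and $\epsilon > 0$. The previous step shows that the preimage under the inclusion $(G,\tau) \hookrightarrow \Aut(Y,\lambda)$ of each such subbasic set is a countable union of Baire-measurable sets, so the inclusion is a Baire-measurable group homomorphism between Polish groups. By Pettis' lemma it is therefore continuous, meaning that $\tau$ refines the weak topology on $G$ — directly contradicting Theorem \ref{thm: not polishable}.

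The conceptual heart of the argument, which I expect to be the main obstacle to identify, is the purely algebraic characterisation $\mathcal C(G_A) = G_{Y \setminus A}$: this is what lets the descriptive-set-theoretic hypothesis on $\tau$ be fed into the group structure without ever knowing in advance that $\tau$ bears any relation to the weak topology. Everything else — closure of $G(A,B)$, the factorisation $\{T : \lambda(T(A) \setminus A) \leq \epsilon\} = G_{Y \setminus A} \cdot G(A,B)$, and the Pettis-style upgrade from Baire-measurability to continuity — is by now a fairly standard routine, once one accepts that the subbase of the weak topology is captured by these algebraically defined sets.
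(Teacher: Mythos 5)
Your proposal is correct and follows essentially the same route as the paper: centralisers give $\tau$-closedness of $G_{Y\setminus A}$ and then of $G(A,B)$, the factorisation $\{T:\lambda(T(A)\setminus A)\leq\epsilon\}=G_{Y\setminus A}\cdot G(A,B)$ gives analyticity and hence Baire-measurability, and Pettis' lemma upgrades this to continuity of the inclusion into $\Aut(Y,\lambda)$, contradicting Theorem \ref{thm: not polishable}. The only difference is that you spell out slightly more carefully why Baire-measurability of these particular sets suffices (translations and countable unions to recover the subbase of the weak topology), a step the paper compresses into ``it easily follows.''
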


\subsection{Automatic continuity for \texorpdfstring{$\Aut(Y,\lambda)$}{Aut(Y,lambda)}}\label{sec:infinite MP has AC}

Let us now briefly indicate why $\Aut(Y,\lambda)$ has the automatic continuity property by checking the criterions given by Sabok \cite{sabokAutomaticContinuityIsometry2019} and then simplified by Malicki \cite{malickiConsequencesExistenceAmple2016}. We won't give full details since the proofs adapt verbatim and we refer the reader to Malicki's paper for definitions of the terms used thereafter. 

As explained in Section \ref{sec:prelim sigma finite}, we may view the group $\Aut(Y,\lambda)$ as the group of automorphisms of the metric structure $(\MAlg_f(Y,\lambda),d_\lambda, \bigtriangleup, \cap)$ where 
\begin{itemize}
	\item $\MAlg_f(Y,\lambda)$ is the set of finite measure Borel subsets of $Y$, up to measure zero;
	\item $d_\lambda(A,B)=\lambda(A\bigtriangleup B)$;
	\item $\bigtriangleup$ and $\cap$ are the usual set-theoretic operations, thus making $(\MAlg_f(Y,\lambda), \bigtriangleup, \cap)$ a Boolean algebra (without unit).
\end{itemize}

%
%
%
%
%

It is well-known that $\MAlg_f(Y,\lambda)$ is homogeneous and complete as a metric structure. 

\begin{lem}
	Finite tuples of disjoint subsets of $(Y,\lambda)$ are ample and relevant.
\end{lem}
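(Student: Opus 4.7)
The plan is to verify both conditions -- ampleness and relevance in the sense of Malicki \cite{malickiConsequencesExistenceAmple2016} -- directly for tuples $\bar A = (A_1,\ldots,A_n)$ of pairwise disjoint Borel sets of finite $\lambda$-measure. The key structural fact we will use repeatedly is that the $\Aut(Y,\lambda)$-orbit of such a tuple is completely determined by its measure profile $(\lambda(A_1),\ldots,\lambda(A_n))$, because $(Y,\lambda)$ is a standard $\sigma$-finite space, each $(A_i, \lambda_{|A_i})$ is a standard finite-measure space, and the complement $Y\setminus \bigsqcup_i A_i$ always has infinite measure, so any two such complements are measurably isomorphic. This is the ``homogeneity engine'' driving everything else.

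For \emph{relevance}, the task reduces to verifying that stabilizers of disjoint tuples generate a neighborhood basis of the identity for the weak topology. This is essentially immediate from the characterization of the weak topology recalled in the preliminaries: a basic neighborhood of $\mathrm{id}$ is controlled by finitely many finite-measure sets and an $\epsilon > 0$, and by passing to the atoms of the finite Boolean subalgebra they generate one may as well assume the parameter sets are disjoint. Any additional clauses in Malicki's definition (density of the parameter space, closure under suitable operations) are readily checked, since finite disjoint tuples are dense in $M(Y,\lambda)^n$ and form a cofinal family under refinement.

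For \emph{ampleness}, the core content is an extension/amalgamation property: given two disjoint tuples $\bar A$, $\bar B$ with matching measure profiles on some indices, together with a partial match realized by some $T_0 \in \Aut(Y,\lambda)$, one must extend $T_0$ to a full automorphism consistent with the full match. Here I would exploit two ingredients: (i) on the matched portion, the restrictions are measure-preserving maps between standard finite spaces with equal total mass, so they glue to a single Borel isomorphism; (ii) on the unmatched portion, the remainder together with $Y\setminus\bigsqcup_i A_i$ and $Y\setminus\bigsqcup_j B_j$ both have infinite measure and are therefore measurably isomorphic via any chosen extension. A standard back-and-forth construction on the metric Boolean algebra then closes up the extension, and this is exactly the step where Malicki's original argument (for the probability case) transports verbatim.

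The main obstacle I anticipate is book-keeping: translating the abstract definitions of ampleness and relevance in Malicki's framework into the concrete statements above, and in particular checking that the absence of a unit in the Boolean algebra $M(Y,\lambda)$ does not break any of the conditions. Fortunately this absence of a unit tends to \emph{help}: there is always an infinite-measure ``reservoir'' in any complement, so extension problems never fail for a mass-budget reason. Thus the infinite $\sigma$-finite setting is if anything more flexible than the finite one, which is why the authors claim the proofs adapt without change.
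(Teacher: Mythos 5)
Your proposal is correct and takes essentially the same route as the paper: the paper's entire proof of this lemma is the one-line remark that the proof of Lemma~6.2 of Malicki's paper adapts verbatim, and your sketch simply spells out why the adaptation works --- the orbit of a tuple of disjoint finite-measure sets is determined by its measure profile because the complement always has infinite non-atomic measure, so the homogeneity and extension arguments only get easier. The one caveat is that your paraphrases of \emph{ample} and \emph{relevant} must ultimately be matched against Malicki's precise definitions, but that is exactly the book-keeping you already flag, and it is no more than what the paper itself leaves implicit.
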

\begin{proof}
	The proof of \cite[Lem.~6.2]{malickiConsequencesExistenceAmple2016} adapts verbatim.
\end{proof}

\begin{lem}
	$\MAlg_f(Y,\lambda)$ locally has finite automorphisms and has the extension property.
\end{lem}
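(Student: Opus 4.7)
The plan is to adapt, essentially verbatim, Malicki's verification of these properties for the probability measure algebra \cite{malickiConsequencesExistenceAmple2016}. The key observation is that both ``local finiteness of automorphisms'' and the ``extension property'' concern only substructures generated by a finite tuple of elements of $M(Y,\lambda)$, that is, by finitely many finite-measure Borel subsets of $Y$. Such a substructure therefore lives entirely inside the ``finite-measure part'' of $M(Y,\lambda)$ and is indistinguishable from what one sees in the finite measure case.

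For \emph{local finiteness of automorphisms}: let $\bar A=(A_1,\dots,A_n)$ be a finite tuple of pairwise disjoint finite-measure sets in $Y$, the general case reducing to this one by taking Boolean combinations. The substructure generated by $\bar A$ in $(M(Y,\lambda),\bigtriangleup,\cap)$ is the finite Boolean algebra whose atoms are the $A_i$'s. Any automorphism of this substructure is then determined by a permutation $\sigma\in\mathfrak S_n$ which preserves $\lambda$, i.e.\ such that $\lambda(A_{\sigma(i)})=\lambda(A_i)$ for all $i$. Hence the automorphism group is a finite subgroup of $\mathfrak S_n$, giving local finiteness.

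For the \emph{extension property}: a partial isomorphism between two such finitely generated substructures is given by a measure-preserving correspondence between two disjoint tuples $\bar A=(A_1,\dots,A_n)$ and $\bar B=(B_1,\dots,B_n)$ with matching measures. To extend it to an element of $\Aut(Y,\lambda)$, I would piece together a measure-preserving bijection between each $A_i$ and its image $B_{\sigma(i)}$ (which exist since these are standard Borel spaces equipped with equal finite measures), and complete this with any measure-preserving bijection between the complements $Y\setminus\bigsqcup_i A_i$ and $Y\setminus\bigsqcup_i B_i$. Both complements are standard Borel spaces equipped with a non-atomic $\sigma$-finite infinite measure, hence isomorphic to $(Y,\lambda)$ itself by the uniqueness recalled at the start of the preliminaries, so such a bijection exists. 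This is the only point at which the passage from the finite to the $\sigma$-finite setting intervenes.

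The main obstacle, if any, is purely bookkeeping: one must check that Malicki's precise quantitative formulations of these properties (involving approximate partial automorphisms) transfer from the probability to the $\sigma$-finite case. This should go through without issue since the relevant estimates only ever involve finitely many sets of finite measure, for which the two frameworks coincide.
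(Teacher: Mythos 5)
Your proposal is correct and takes essentially the same route as the paper: the whole point is that both properties only ever involve finitely generated substructures, which consist of finitely many finite-measure sets and hence (taking the union of the generators as a unit) sit inside a probability measure algebra, so Sabok's and Malicki's verifications for the probability case carry over; your explicit matching of the two infinite-measure complements is precisely the one place where the $\sigma$-finite setting enters, and it is handled correctly. One small caution: ``locally has finite automorphisms'' in Sabok's sense is an approximation statement about automorphisms of the ambient structure by automorphisms of finitely generated substructures, not merely the finiteness of $\Aut$ of a finitely generated substructure, so your middle paragraph should be read as a gloss and the actual verification deferred, as you do, to the quantitative formulations.
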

\begin{proof}
	Every finitely generated substructure of $\MAlg_f(Y,\lambda)$ has a unit $X$ so that we may see it as a substructure of the measure algebra $\MAlg(X,\lambda_X)$, which up to rescaling is the measure algebra over a standard probability space. 
	The result then follows from \cite[Lem.~8.1 and Lem.~8.2]{sabokAutomaticContinuityIsometry2019}.
\end{proof}

As a consequence of Malicki's theorem \cite[Thm.~3.4]{malickiConsequencesExistenceAmple2016}, we thus have the following result.

\begin{thm}\label{thm: AC for infinite measure space}
	The group $\Aut(Y,\lambda)$ of measure-preserving transformation of an infinite $\sigma$-finite standard measured space has the automatic continuity property.
\end{thm}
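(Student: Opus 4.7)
My plan is to apply Malicki's criterion for automatic continuity \cite[Thm.~3.4]{malickiConsequencesExistenceAmple2016} to the metric structure
$$\mathcal M := (M(Y,\lambda), d_\lambda, \bigtriangleup, \cap),$$
which is a Boolean algebra without unit whose automorphism group is naturally identified with $\Aut(Y,\lambda)$ via the proposition already recorded in the preliminaries. Once this identification is in place, the automatic continuity of $\Aut(Y,\lambda)$ will follow provided one checks the hypotheses of Malicki's theorem: that $\mathcal M$ is a homogeneous and complete metric structure, that suitable finite tuples are ample and relevant, that $\mathcal M$ locally has finite automorphisms, and that $\mathcal M$ has the extension property.

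First I would quickly observe that $(M(Y,\lambda),d_\lambda)$ is a complete separable metric space by Borel--Cantelli, and homogeneity is standard: any partial isometry between finitely generated (equivalently, finitely supported) Boolean substructures of $M(Y,\lambda)$ extends to a global automorphism, because such a partial isometry lives inside a finite measure piece and one can extend by the identity outside. Then I would single out tuples of pairwise disjoint Borel sets of finite measure as the candidate class of tuples, and verify that they are ample and relevant; the point is that in the finite measure algebra this is precisely what is done in \cite[Lem.~6.2]{malickiConsequencesExistenceAmple2016}, and nothing changes when we pass to the $\sigma$-finite setting because any finite tuple is automatically supported in a set of finite measure.

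The main verification, and the place I expect the only real content, is the local finiteness of automorphisms and the extension property. Here the key reduction is that any finitely generated substructure of $\mathcal M$ has a natural unit, namely the union $X$ of the finitely many generators; this union has finite $\lambda$-measure. Rescaling $\lambda_{\restriction X}$ to a probability measure, the finitely generated substructure embeds isometrically into the probability measure algebra $M(X,\lambda_X)$, and both properties (finite local automorphisms and the extension property) for that probability measure algebra are exactly what Sabok verifies in \cite[Lem.~8.1 and Lem.~8.2]{sabokAutomaticContinuityIsometry2019}. Lifting back from $M(X,\lambda_X)$ to $\mathcal M$ is straightforward since any automorphism of the finite measure piece extends by the identity on $Y\setminus X$ to an element of $\Aut(Y,\lambda)$.

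The conclusion is then immediate: Malicki's theorem applied to $\mathcal M$ yields that $\Aut(\mathcal M) = \Aut(Y,\lambda)$ has the automatic continuity property. The only potential obstacle would be if one of the reductions to the finite-measure case were obstructed by the fact that $\mathcal M$ has no unit, but since all of Malicki's and Sabok's hypotheses are local conditions on finitely generated substructures, the absence of a global unit is harmless.
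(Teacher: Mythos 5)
Your proposal follows essentially the same route as the paper: it applies Malicki's criterion to the unital-free Boolean metric structure $(M(Y,\lambda),d_\lambda,\bigtriangleup,\cap)$, takes finite tuples of disjoint sets as the ample and relevant tuples via \cite[Lem.~6.2]{malickiConsequencesExistenceAmple2016}, and reduces local finiteness of automorphisms and the extension property to the probability-measure-algebra case of \cite[Lem.~8.1 and Lem.~8.2]{sabokAutomaticContinuityIsometry2019} by observing that every finitely generated substructure has a finite-measure unit. This matches the paper's argument step for step, so there is nothing to add.
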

\begin{cor}[{Kallman \cite{kallmanUniquenessResultsGroups1985}}]
	The group $\Aut(Y,\lambda)$ has a unique Polish group topology.
\end{cor}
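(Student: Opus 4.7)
The plan is to deduce uniqueness of the Polish group topology on $\Aut(Y,\lambda)$ directly from the automatic continuity property established in Theorem \ref{thm: AC for infinite measure space}, following the standard recipe: automatic continuity plus the open mapping theorem for Polish groups forces any two Polish group topologies on the same underlying group to coincide.

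More precisely, let $\tau_0$ denote the weak topology on $G\coloneqq\Aut(Y,\lambda)$, which was already shown to be Polish in the preliminaries, and suppose $\tau$ is another Polish group topology on the abstract group $G$. I would consider the identity map $\mathrm{id}\colon(G,\tau_0)\to(G,\tau)$. Since $(G,\tau)$ is a Polish (hence separable) topological group and $\mathrm{id}$ is a group homomorphism, Theorem \ref{thm: AC for infinite measure space} applied to $(G,\tau_0)$ forces $\mathrm{id}$ to be continuous. This already yields the inclusion $\tau\subseteq\tau_0$.

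For the reverse inclusion, I would invoke the open mapping theorem for Polish groups: a continuous surjective group homomorphism between Polish groups is open. Applied to the continuous bijective homomorphism $\mathrm{id}\colon(G,\tau_0)\to(G,\tau)$, this gives that $\mathrm{id}$ is open, hence a homeomorphism, so $\tau_0\subseteq\tau$. Combining both inclusions gives $\tau=\tau_0$, proving the uniqueness of the Polish group topology. No step here is a real obstacle since all the heavy lifting has already been done in Theorem \ref{thm: AC for infinite measure space}; the only ingredient beyond automatic continuity is the classical open mapping theorem, which is a standard consequence of Baire category in Polish groups.
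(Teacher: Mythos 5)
Your argument is correct and is exactly the standard deduction the paper intends: the corollary is stated without proof immediately after Theorem \ref{thm: AC for infinite measure space}, and the intended reasoning is precisely that automatic continuity gives continuity of the identity from the weak topology to any other Polish group topology, after which the open mapping theorem for Polish groups upgrades this to a homeomorphism. Nothing is missing.
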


\begin{rmq}
	Let $\MAlg_1(Y,\lambda)$ denote the closed set of all $A\in \MAlg_f(Y,\lambda)$ whose measure is at most $1$. 
	It is easy to check that the $\Aut(Y,\lambda)$-action on $\MAlg_1(Y,\lambda)$ is approximately oligomorphic
	and that $\Aut(Y,\lambda)$ is a closed subgroup of the isometry group of $\MAlg_1(Y,\lambda)$. 
	By \cite[Thm.~2.4]{benyaacovWeaklyAlmostPeriodic2016}, we conclude 
	that $\Aut(Y,\lambda)$ is a Roecke precompact Polish group.
\end{rmq}

\subsection{A characterization of automatic continuity}\label{sec:pfcharaauto}

We finally use the previous results to characterize automatic continuity for $\Aut(Y,\lambda)$, where $(Y,\lambda)$ be a standard Borel space equipped with a Borel $\sigma$-finite measure $\lambda$. Recall that for such a measure there are only countably many atoms and they have finite measure (by $\sigma$-finiteness), and that each atom is a singleton (because $Y$ is standard). 
Let us  say that the $\lambda$-\textbf{atomic multiplicity} of a positive real $r$ is the (possibly infinite) number of atoms in $Y$ whose measure is equal to $r$.

\begin{thm}
	Let $(Y,\lambda)$ be a standard Borel space equipped with a Borel $\sigma$-finite measure $\lambda$. Then the following are equivalent:
	\begin{enumerate}[(i)]
		\item $\Aut(X,\lambda)$ has the automatic continuity property;
		\item There are only finitely many positive reals whose $\lambda$-atomic multiplicity belongs to $[2,+\infty[$.
	\end{enumerate}
\end{thm}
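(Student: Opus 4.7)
The plan is to exploit the natural decomposition of $Y$ into its continuous part $Y_{cont}$ and its atomic part $Y_{at}$: since any measure-preserving transformation preserves this splitting and acts on atoms by permutations within each mass class, one obtains a topological direct product
\[
\Aut(Y,\lambda)\;\cong\;\Aut(Y_{cont},\lambda_{cont})\;\times\;\prod_{r>0}\mathfrak S_{n_r},
\]
where $n_r$ is the $\lambda$-atomic multiplicity of $r$, each $\mathfrak S_{n_r}$ acts by permutations on the atoms of mass $r$, and the product ranges over the countably many $r$ with $n_r\geq 1$ (by $\sigma$-finiteness). The weak topology on the left coincides with the product of the natural topologies on the right, because convergence on a singleton atom amounts to eventual pointwise equality. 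The plan is to analyse automatic continuity factor by factor, so as to match condition (ii) precisely.

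For the direction (ii)\;$\Rightarrow$\;(i), the factors split into three kinds: (a) the continuous factor $\Aut(Y_{cont},\lambda_{cont})$, which has automatic continuity by \cite{benyaacovPolishTopometricGroups2013} when $\lambda_{cont}$ is a finite nontrivial measure, by Theorem \ref{thm: AC for infinite measure space} when it is infinite, and is trivial otherwise; (b) the product $\prod_{r\,:\,n_r\in[2,\infty)}\mathfrak S_{n_r}$, which under (ii) is a \emph{finite} group and thus trivially has automatic continuity; and (c) a countable power $\mathfrak S_\infty^I$ coming from atoms of infinite multiplicity, with $I$ at most countable. Since $\mathfrak S_\infty$ has ample generics \cite{kechrisTurbulenceAmalgamationGeneric2007} and ample generics is preserved under countable products (a $G$-orbit in the diagonal conjugation action on $(\prod G_n)^k$ is the product of the corresponding $G_n$-orbits, and a product of comeager sets is comeager in a countable product of Polish spaces), (c) has ample generics, hence automatic continuity. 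To combine the three kinds, one uses that automatic continuity is preserved under finite direct products: any homomorphism $\pi\colon G\times H\to K$ into a separable topological group factors as $\pi(g,h)=\pi(g,e)\pi(e,h)$, a product of two continuous maps.

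For the contrapositive (i)\;$\Rightarrow$\;(ii), assume there are infinitely many reals $r_1,r_2,\ldots$ with $n_{r_k}\in[2,\infty)$. Then $K:=\prod_{k\in\N}\mathfrak S_{n_{r_k}}$ is a topological direct factor of $\Aut(Y,\lambda)$: a compact infinite product of nontrivial finite symmetric groups. Automatic continuity descends to topological direct factors (extend homomorphisms trivially on the complementary factor) and passes through continuous surjective Polish group homomorphisms (which are automatically open by the open mapping theorem); so it suffices to rule out automatic continuity for a continuous quotient of $K$ mapping into a separable group. The sign homomorphisms $\mathfrak S_n\onto \Z/2\Z$ combine into a continuous surjection $K\onto(\Z/2\Z)^{\N}$. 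Now view $(\Z/2\Z)^{\N}$ as an $\mathbb F_2$-vector space of dimension $2^{\aleph_0}$, extend the standard basis vectors $(e_n)_{n\in\N}$ to a Hamel basis, pick a basis vector $f$ outside the $\mathbb F_2$-span of the $e_n$, and define $\pi\colon (\Z/2\Z)^{\N}\to\Z/2\Z$ to send $f$ to $1$ and every other basis vector to $0$. Any continuous homomorphism into the discrete group $\Z/2\Z$ has open kernel, hence factors through a finite quotient $(\Z/2\Z)^F$ and is therefore a finite $\mathbb F_2$-sum of coordinate projections; but $\pi$ vanishes on every $e_n$, so any such factorisation would be identically zero, contradicting $\pi(f)=1$. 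This $\pi$ is thus the desired discontinuous homomorphism.

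The most delicate point is the stability of ample generics under countable products of $\mathfrak S_\infty$; this is handled by the orbit-and-comeagerness observation above, and lets us sidestep any direct Steinhaus-type computation on the infinite product factor.
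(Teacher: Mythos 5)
Your proof is correct and follows essentially the same route as the paper: the same splitting into the non-atomic part, the finitely many finite multiplicity classes, and the infinite multiplicity classes, with the same citations for each factor, and the same reduction of the converse direction to a continuous surjection onto $(\Z/2\Z)^{\N}$ via signature maps. The only (minor) divergence is the last step of (i)$\Rightarrow$(ii), where you exhibit a discontinuous character on $(\Z/2\Z)^{\N}$ explicitly via a Hamel basis, while the paper instead counts characters (the $2^{2^{\aleph_0}}$ ultrafilter characters versus at most $2^{\aleph_0}$ continuous ones); both are standard and correct, and your justification that ample generics passes to countable products fills in a fact the paper merely asserts.
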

\begin{proof}
	We first prove the contrapositive of (i)$\impl$(ii). Suppose there are infinitely many positive reals whose $\lambda$-atomic multiplicity belongs to $[2,+\infty[$ and enumerate them as $(r_n)_{n\in\N}$. Then if $A_n$ is the set of atoms of measure $r_n$, we see that each $A_n$ is $\Aut(Y,\lambda)$-invariant and we thus get natural surjection $$\Aut(Y,\lambda)\onto \prod_n \mathfrak S(A_n).$$
	For each $n$, let $\sigma_n$ be the signature map $\mathfrak S(A_n)\onto\{\pm 1\}$. By composing our previous homomorphism with $(\sigma_n)_{n\in\N}$ we get a continuous surjection $\Aut(Y,\lambda)\onto \{\pm1\}^\N$. Since the latter has $2^{2^{\aleph_0}}$ distinct homomorphisms onto $\{\pm 1\}$ (indeed each ultrafilter on $\N$ provides such a homomorphism) and there are at most $2^{\aleph_0}$ continuous homomorphisms $\Aut(Y,\lambda)\to \{\pm 1\}$, we conclude that $\Aut(Y,\lambda)$ does not have the automatic continuity property. 
	
	We now prove (ii)$\impl$(i). Let $(r_i)_{i=1}^n$ be the reals whose $\lambda$-atomic multiplicity belongs to $[2,+\infty[$ and let $A_i$ be the set of atoms of measure $r_i$. Let $(s_j)_{j\in J}$ denote the reals whose $\lambda$-atomic multiplicity is infinite and let $B_j$ be the set of atoms of measure $s_j$. Finally, let $\eta$ be the non-atomic part of $\lambda$. We then have a decomposition
	\begin{equation}\label{eqn:decomposition}\Aut(Y,\lambda)=\Aut(Y,\eta)\times\prod_{i=1}^n\mathfrak S(A_i)\times\prod_{j\in J}\mathfrak S(B_j),\end{equation}
	where $\mathfrak S(B_j)$ is equipped with the topology of pointwise convergence, viewing $B_j$ as a discrete set. 
	
	Let us  show that $\Aut(Y,\eta)$, $\prod_{i=1}^n\mathfrak S(A_i)$ and $\prod_{j\in J}\mathfrak S(B_j)$ have automatic continuity. Since $\lambda$ is $\sigma$-finite, $\eta$ also is. We then have three cases to check.
	\begin{itemize}
		\item If $\eta$ is trivial, $\Aut(Y,\eta)$ also is and hence has automatic continuity.
		\item If $\eta$ is finite, $\Aut(Y,\eta)$ has automatic continuity by 
		\cite[Thm.~6.2]{benyaacovPolishTopometricGroups2013}. 
		\item If $\eta$ is infinite, $\Aut(Y,\eta)$ has automatic continuity by Theorem \ref{thm: AC for infinite measure space}. 
	\end{itemize}
	The group $\prod_{i=1}^n\mathfrak S(A_i)$ is finite and thus has automatic continuity. Finally the group $\prod_{j\in J}\mathfrak S(B_j)$ is a countable product of groups with ample generics and hence has ample generics. By \cite[Thm.~1.10]{kechrisTurbulenceAmalgamationGeneric2007} it has automatic continuity.
	
	Since any finite product of groups with automatic continuity has automatic continuity, we conclude from \eqref{eqn:decomposition} that $\Aut(Y,\eta)$ has the automatic continuity property.
\end{proof}

\section{The group of non-singular transformations}\label{sec:nonpres}

\subsection{Preliminaries}

A \textbf{standard probability space} is a standard Borel space equipped with a Borel nonatomic probability measure. All such spaces are isomorphic, and we fix from now on such a standard probability space $(X,\mu)$. 

A Borel bijection $T$ of $(X,\mu)$ is called \textbf{non-singular} if the pushforward measure $T_*\mu$ is equivalent to $\mu$, that is, if for all Borel $A\subseteq X$, we have $\mu(A)=0$ if and only if $\mu(T\inv(A))=0$. Denote by $\Aut^*(X,\mu)$ the group of non-singular Borel bijections of $(X,\mu)$, two such bijections being identified if they coincide up to measure zero. 

The \textbf{weak topology} on $\Aut^*(X,\mu)$ is a metrizable group topology defined by declaring that a sequence $(T_n)$ of elements of $\Aut^*(X,\mu)$ weakly converges to  $T\in\Aut^*(X,\mu)$ if for all Borel $A\subseteq X$, one has $\mu(T_n(A)\bigtriangleup T(A))\to 0$ and  
\begin{equation}\norm{ \frac{d(T_{n*}\mu)}{d\mu}-\frac{d(T_*\mu)}{d\mu} }_1\to 0.\end{equation}
We refer the reader to \cite{danilenkoErgodicTheoryNonsingular2011} for more on this topology, which is actually a Polish group topology. 
Our purpose here will be to show that it is the unique Polish group topology one can put on $\Aut^*(X,\mu)$.

For $T\in\Aut^*(X,\mu)$, we define as before its \textbf{support} to be the Borel set 
$$\supp T:=\{x\in X: T(x)\neq x\}.$$
Note that we have again the following relation: for all $S,T\in\Aut^*(X,\mu)$, 
$\supp(STS\inv)=S(\supp T)$.

We denote by $\Aut(X,\mu)$ the group of measure-preserving transformations of $(X,\mu)$, which
is a closed subgroup of $\Aut^*(X,\mu)$.
Similarly to Proposition \ref{prop: Aut(y) as iso group}, $\Aut(X,\mu)$ is the group of isometries of
the \textbf{measure algebra} $\MAlg(X,\mu)$, defined as the set of Borel subsets of $(X,\mu)$ up to measure zero and equipped with the metric $d_\mu(A,B)=\mu(A\bigtriangleup B)$.
The following  theorem of Ben Yaacov, Berenstein and Melleray will imply it is always a Borel subset regardless of the Polish group topology we put on $\Aut^*(X,\mu)$. 

\begin{thm}[{see \cite[Thm.~6.3]{benyaacovPolishTopometricGroups2013}}]Every homomorphism $\Aut(X,\mu)\to H$ where $H$ is a separable topological group has to be continuous. 
\end{thm}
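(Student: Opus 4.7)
The plan is to view $\Aut(X,\mu)$ as the automorphism group of the measure algebra $(\MAlg(X,\mu),d_\mu,\bigtriangleup,\cap)$ regarded as a complete homogeneous metric structure, and to apply the Sabok--Malicki automatic continuity criterion, exactly as is done in Section \ref{sec:infinite MP has AC} for the infinite-measure case. The basic setup is in fact easier here: since $\mu$ is a probability measure, $(\MAlg(X,\mu),d_\mu)$ is bounded, complete and separable, carries a global unit $X$ and the Boolean operations $\bigtriangleup,\cap$, and its group of structure-preserving isometries fixing $\emptyset$ is $\Aut(X,\mu)$ with the weak topology.

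The main verification then concerns the hypotheses of Malicki's criterion \cite[Thm.~3.4]{malickiConsequencesExistenceAmple2016}: that finite tuples of pairwise disjoint elements of $\MAlg(X,\mu)$ are \emph{ample and relevant}, and that $\MAlg(X,\mu)$ \emph{locally has finite automorphisms} and has the \emph{extension property}. The first follows by the argument of \cite[Lem.~6.2]{malickiConsequencesExistenceAmple2016}, and the latter two are \cite[Lem.~8.1,~Lem.~8.2]{sabokAutomaticContinuityIsometry2019}. All three arguments are milder than in the infinite $\sigma$-finite case treated in Section \ref{sec:infinite MP has AC}, since one never has to first reduce to a substructure over a set of finite measure: the global unit $X$ is already available.

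The step I expect to be the main obstacle is verifying the extension property, which requires the following kind of amalgamation: given finite subalgebras $A\subseteq B$ of $\MAlg(X,\mu)$ together with finitely many partial measure-preserving maps on $A$ satisfying an algebraic coherence condition, one must produce genuine elements of $\Aut(X,\mu)$ extending them simultaneously while controlling the displacement of finitely many distinguished elements of $B$ in the metric $d_\mu$. This is achieved by dyadically refining the atoms of $B$ and amalgamating the partial maps via independent couplings on matching pieces of measure. Once the criterion is verified, Malicki's theorem yields immediately that every homomorphism from $\Aut(X,\mu)$ to a separable topological group is continuous, which is the desired statement.
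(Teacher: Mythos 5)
Your proposal is correct, but it takes a genuinely different route from the paper, which offers no proof of this statement at all: the theorem is quoted directly as \cite[Thm.~6.3]{benyaacovPolishTopometricGroups2013}, whose original argument goes through the theory of Polish topometric groups and ample metric generics. What you propose instead is to run the Sabok--Malicki machinery for automorphism groups of metric structures on $(\MAlg(X,\mu),d_\mu,\bigtriangleup,\cap)$; this is precisely the alternative proof that the introduction attributes to \cite{sabokAutomaticContinuityIsometry2019} and \cite{malickiConsequencesExistenceAmple2016}, and it is the same machinery the paper itself deploys in Section \ref{sec:infinite MP has AC} for the infinite $\sigma$-finite case. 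Your supporting lemmas are exactly the ones cited there: ampleness and relevance of tuples of disjoint sets is \cite[Lem.~6.2]{malickiConsequencesExistenceAmple2016}, while local finiteness of automorphisms and the extension property are \cite[Lem.~8.1 and Lem.~8.2]{sabokAutomaticContinuityIsometry2019}, which are stated for measure algebras of probability spaces --- so in your setting they apply directly, whereas the paper must first restrict a finitely generated substructure of $M(Y,\lambda)$ to its unit and rescale before invoking them. Your observation that the probability case is the easier one is therefore accurate, and your sketch of the extension property (dyadic refinement of atoms and amalgamation by independent couplings) is essentially Sabok's argument. In terms of what each approach buys: the topometric route of \cite{benyaacovPolishTopometricGroups2013} is historically first and self-contained; the Sabok--Malicki route is more modular, reduces everything to concrete amalgamation statements about the measure algebra, and is the one that generalizes to the infinite-measure setting, which is exactly why the paper adopts it there.
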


Finally, we will need the following kind of converse of the fact that elements of $\Aut(X,\mu)$ preserve the measure: given two subsets $A,B\subseteq X$ of the same measure, there exists $T\in\Aut(X,\mu)$ supported on $A\cup B$ such that $T(A)=B$ up to measure $0$.

\subsection{Uniqueness of the Polish group topology of $\Aut^*(X,\mu)$}

\begin{thm}The weak topology is the unique Polish group topology on the group $\Aut^*(X,\mu)$.
\end{thm}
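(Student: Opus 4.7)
The plan is to show that any Polish group topology $\tau$ on $\Aut^*(X,\mu)$ coincides with the weak topology $\tau_w$. The strategy is classical: prove that the identity map $\mathrm{id}\colon (\Aut^*(X,\mu),\tau)\to(\Aut^*(X,\mu),\tau_w)$ is Baire-measurable, deduce by Pettis' theorem that it is continuous (so $\tau\supseteq\tau_w$), and conclude $\tau=\tau_w$ by the open mapping theorem for Polish groups. The decisive new ingredient, flagged in the introduction, is that $\Aut(X,\mu)$ is a $\tau$-Borel subgroup of $\Aut^*(X,\mu)$ for any Polish group topology $\tau$: the inclusion $(\Aut(X,\mu),\tau_w)\hookrightarrow(\Aut^*(X,\mu),\tau)$ is a homomorphism into a separable topological group, hence continuous by the cited Ben Yaacov--Berenstein--Melleray theorem, realising $\Aut(X,\mu)$ as a continuously embedded Polish subgroup.

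The Kallman-style preliminaries from the excerpt transpose word-for-word. For Borel $A\subseteq X$ the centraliser calculation gives $\mathcal C(G_A)=G_{X\setminus A}$ where $G_A:=\{T\in\Aut^*(X,\mu):\supp T\subseteq A\}$, so both $G_A$ and $G_{X\setminus A}$ are $\tau$-closed; consequently $G(A,B):=\{T:T(A)\subseteq B\}=\{T:TG_AT\inv\subseteq G_B\}$ is $\tau$-closed, as is $H(A,B):=\{T:T\inv(A)\subseteq B\}$, and so is the set-stabiliser $\mathrm{Stab}(B):=\{T:T(B)=B\}$.

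The heart of the argument is to express a countable sub-basis of $\tau_w$ as pointwise products $K\cdot F$, with $K$ a $\tau$-Borel subset of $\Aut(X,\mu)$ and $F$ a $\tau$-closed set, yielding $\tau$-analytic (hence Baire-measurable) subsets. For the set-theoretic part of $\tau_w$ I would prove the Kallman-type identities
\[\{T:\mu(T(A)\setminus B)\leq c\}=\mathrm{Stab}_{\Aut(X,\mu)}(B)\cdot G(A,B\cup D_0),\]
where $D_0\subseteq X\setminus B$ has measure $c$, and symmetrically
\[\{T:\mu(B\setminus T(A))\leq c\}=\mathrm{Stab}_{\Aut(X,\mu)}(B)\cdot H(B_0,A)\]
with $B_0\subseteq B$ of measure $\mu(B)-c$; combining these makes $\{T:\mu(T(A)\bigtriangleup B)<\epsilon\}$ $\tau$-analytic. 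For the Radon--Nikodym part I would use the identity $\|T_*\mu-f\mu\|_{TV}=2\sup_{A\in\mathcal A}|\mu(T\inv A)-(f\mu)(A)|$ for a countable dense subalgebra $\mathcal A$ of $\MAlg(X,\mu)$, together with the decomposition $\{T:\mu(T\inv A)\leq c\}=H(A,B_0)\cdot\Aut(X,\mu)$ for $\mu(B_0)=c$. Each identity is verified by directly building the measure-preserving factor: the relevant measure inequality leaves enough room to match a piece of $T(A)$ or $T\inv(A)$ with the reference set via a measure-preserving bijection of the complementary piece.

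The main obstacle I anticipate is the Radon--Nikodym component of $\tau_w$: $L^1$-convergence of the densities is strictly stronger than pointwise convergence of $T\mapsto\mu(T\inv A)$ on a countable generating algebra, which would only yield weak-$*$ convergence of the pushforward measures. The total-variation-as-supremum identity circumvents this, and it is precisely here that the $\tau$-Borelness of $\Aut(X,\mu)$ is essential, since ranging over subsets of $B$ of a prescribed measure requires the action of the measure-preserving stabiliser $\mathrm{Stab}_{\Aut(X,\mu)}(B)$. Once every sub-basic $\tau_w$-open set is known to be $\tau$-analytic, Pettis' theorem yields continuity of $\mathrm{id}$, and the two Polish group topologies must coincide.
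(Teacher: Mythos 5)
Your proposal is correct and rests on the same two pillars as the paper's proof: the Kallman centraliser machinery making the groups $\mathbb G_A$ and the sets $\mathbb G(A,B)$ $\tau$-closed, and the crucial observation that automatic continuity for $\Aut(X,\mu)$ forces it to be a $\tau$-Borel subgroup, so that pointwise products $K\cdot F$ with $K$ a $\tau$-Borel subset of $\Aut(X,\mu)$ and $F$ $\tau$-closed are analytic, hence Baire-measurable. Where you genuinely diverge is in \emph{which} family of sets you choose to make Baire-measurable. You aim at a full countable sub-basis of the weak topology, which forces you to treat both halves of the symmetric difference and, most laboriously, the Radon--Nikodym component via the total-variation-as-supremum identity and the decomposition $\{T:\mu(T\inv A)\leq c\}=H(A,B_0)\cdot\Aut(X,\mu)$; these identities do check out (in each case the measure inequality leaves exactly enough room to build the measure-preserving factor), so your route works. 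The paper avoids all of this: it only establishes Baire-measurability of the single family $\mathbb B_{n,m,k}=\{T:\mu(T(A_n)\setminus A_m)\leq 2^{-k}\}$ and then invokes the fact that a countable \emph{separating} family of Borel sets in a standard Borel space generates its entire Borel $\sigma$-algebra. Since the $\mathbb B_{n,m,k}$ separate points of $\Aut^*(X,\mu)$, every weakly Borel set --- including those controlling the Radon--Nikodym derivatives --- automatically lies in the $\sigma$-algebra they generate and is therefore $\tau$-Baire-measurable; the density part of the weak topology, which you rightly identify as the main obstacle, never needs to be touched. The endings are equivalent: you invoke the open mapping theorem for a continuous bijective homomorphism of Polish groups, while the paper applies Lusin--Suslin to get that the inverse is Borel and then Pettis a second time. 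Your extra identities are sound, but the descriptive-set-theoretic shortcut via separating families is the reason the paper's proof stays short, and it is worth internalising.
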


\begin{proof}

Let us fix a countable dense subalgebra of $\MAlg(X,\mu)$ and enumerate it as $(A_n)_{n\in\N}$. For $m,n,k\in\N$, we let 
$$\mathbb B_{n,m,k}:=\left\{T\in\Aut^*(X,\mu): \mu(T(A_n)\setminus A_m)\leq \frac 1{2^k}\right\}.$$

Let us first show that the Borel group structure of $\Aut^*(X,\mu)$ is generated by the subsets $\mathbb B_{n,m,k}$. 

First note that since $\Aut^*(X,\mu)$ acts continuously on $\MAlg(X,\mu)$, we have that each $\mathbb B_{n,m,k}$ is closed, hence Borel. 

By density of $(A_n)$ in $\MAlg(X,\mu)$, we have that $(\mathbb B_{n,m,k})_{n,m,k\in\N}$ is a countable \textit{separating} family of Borel subsets of the standard Borel space $\Aut^*(X,\mu)$. We conclude that $(\mathbb B_{n,m,k})_{n,m,k\in\N}$ generates the Borel $\sigma$-algebra of $\Aut^*(X,\mu)$. \\ 

Let now $\tau$ be a Polish group topology on $\mathbb G\coloneqq \Aut^*(X,\mu)$. To conclude that $\tau$ is the weak topology, it suffices to show that each $\mathbb B_{n,m,k}$ is $\tau$-Baire-measurable. We need a few easy lemmas already present in the previous section.

For a Borel subset $A\subseteq X$, we let $\mathbb G_{A}$ denote the group of $T\in\Aut^*(X,\mu)$ such that $\supp T\subseteq A$. For a subset $\mathbb B\subseteq \Aut^*(X,\mu)$, let $\mathcal C(\mathbb B)$ denote its centraliser.
We now repeat the short proofs of Lemma \ref{lem: centralizer pres} and \ref{lem:G(A,B) closed pres}.

\begin{lem}\label{lem: GA closed}For all $A\subseteq X$ we have $\mathcal C(\mathbb G_A)=\mathbb G_{X\setminus A}$. In particular $\mathbb G_A$ is $\tau$-closed.
\end{lem}
\begin{proof}
We clearly have $\mathbb G_{X\setminus A}\leq \mathcal C(\mathbb G_A)$.

Take $T\not\in \mathbb G_{X\setminus A}$. Then there exists $B\subseteq A$ with $T(B)$ disjoint from $B$. But clearly $T$ does not commute with any nontrivial element of $\Aut^*(X,\mu)$ supported in $B$, in particular $T\not \in \mathcal C(\mathbb G_A)$.
\end{proof}

Note that for all $T\in\Aut^*(X,\lambda)$ and all $A\subseteq X$, we have again
$\mathbb G_{T(A)}=T\mathbb G_AT\inv.$
Denote by $\mathbb G(A,B)$ the set of $T\in\Aut^*(X,\mu)$ such that $T(A)\subseteq B$. 

\begin{lem}\label{lem:G(A,B) closed}
For all $A,B\subseteq X$, the set $\mathbb G(A,B)$ is $\tau$-closed.
\end{lem}
\begin{proof}
By the equality (\ref{eq:conjugacy}), we have $\mathbb G(A,B)=\{T\in\Aut^*(X,\mu): T\inv \mathbb G_AT\subseteq \mathbb G_{B}\}$. 
So by the previous lemma $\mathbb G(A,B)$ is the set of all $T\in\Aut^*(X,\mu)$ such that for all $U\in\mathbb G_A$, $TUT\inv$ commutes with every element of $\mathbb G_{X\setminus B}$. This is clearly a $\tau$-closed condition.
\end{proof}

We now make a crucial remark which relies on the automatic continuity property for $\Aut(X,\mu)$. 

\begin{lem}\label{lem: HA borel}
For $A\subseteq X$, let $\mathbb H_A=\{T\in\Aut(X,\mu): \supp T\subseteq A\}$. Then $\mathbb H_A$ is a $\tau$-Borel subset of $\Aut^*(X,\mu)$.
\end{lem}
\begin{proof}
By the automatic continuity property for $\Aut(X,\mu)$, we know that $\Aut(X,\mu)$ has to be a $\tau$-Borel subset of $\Aut^*(X,\mu)$. But $\mathbb H_A=\mathbb G_A\cap \Aut(X,\mu)$ and by Lemma \ref{lem: GA closed} we have that $\mathbb G_A$ is closed, so $\mathbb H_A$ is Borel. 
\end{proof}

\begin{rmq}
	We could replace $\Aut(X,\mu)$ by the full group of any measure-preserving ergodic equivalence relation on $(X,\mu)$ in our argument. 
	Indeed such a group also has the automatic continuity property by a result of Kittrell and Tsankov \cite{kittrellTopologicalPropertiesFull2010}, and acts transitively on sets of equal measure. 
\end{rmq}

Let $n,m,k\in\N$; we finally prove that the set $$\mathbb B_{n,m,k}=\left\{T\in\Aut^*(X,\mu): \mu(T(A_n)\setminus A_m)\leq \frac 1{2^k}\right\}$$ is $\tau$-Baire-measurable. We may assume that $\mu(A_m)<1-\frac 1{2^k}$ because otherwise $\mathbb B_{n,m,k}=\Aut^*(X,\mu)$. Let $B$ a Borel set containing $A_m$ such that $\mu(B)=\mu(A_m)+\frac 1{2^k}$. 

\begin{claim}We have $\mathbb B_{n,m,k}=\mathbb H_{X\setminus {A_m}}\cdot \mathbb G(A_n, B)$
\end{claim}
\begin{proof}[Proof of claim]
Note that $\mathbb H_{X\setminus {A_m}}$ is a group, and that $\mathbb B_{n,m,k}$ is left $\mathbb H_{X\setminus {A_m}}$-invariant. Moreover since $\mu(B\setminus A_m)=\frac 1{2^k}$ we clearly have $\mathbb G(A_n,B)\subseteq\mathbb B_{n,m,k}$ so $\mathbb H_{X\setminus A_m}\cdot\mathbb G(A_n,B)\subseteq\mathbb B_{n,m,k}$.

For the reverse inclusion, take $T\in \mathbb B_{n,m,k}$. Since $\mu(T(A_n)\setminus A_m)\leq \frac 1{2^k}$ and $\mu(B\setminus A_m)=\frac 1{2^k}$ we may find $U\in\mathbb H_{X\setminus A_m}$ such that $U(T(A_n)\setminus A_m)\subseteq B\setminus A_m$. We conclude that $UT\in\mathbb G(A_n,B)$ so $T\in \mathbb H_{X\setminus A_m}\cdot\mathbb G(A_n,B)$.
\end{proof}

By Lemma \ref{lem:G(A,B) closed} the set $\mathbb G(A_n, B)$ is $\tau$-closed, while by Lemma \ref{lem: HA borel} the set $\mathbb H_{X\setminus A_m}$ is $\tau$-Borel. Being the pointwise product of two Borel sets, the set $\mathbb B_{n,m,k}$ is analytic, hence Baire-measurable. 

We can then conclude the proof in a standard manner:
since the sets $\mathbb B_{n,m,k}$ generate the $\sigma$-algebra of the weak topology $w$ on $\Aut^*(X,\mu)$,
the identity map $(\Aut^*(X,\mu),\tau)\to(\Aut^*(X,\mu),w)$ is continuous by Pettis' lemma 
\cite[Thm.~2.3.2]{gaoInvariantDescriptiveSet2009}. Being injective, its inverse is Borel by the Lusin-Suslin theorem \cite[Thm.~15.1]{kechrisClassicaldescriptiveset1995}, and thus continuous as well by one last application of Pettis' lemma.
\end{proof}

%
%

\bibliographystyle{alpha}
\bibliography{/home/francois/Nextcloud/Maths/zoterobib}

\begin{thebibliography}{BYBM13}

\bibitem[BY18]{benyaacovRoelckeprecompactPolishGroup2018}
Ita{\"i} Ben~Yaacov.
\newblock On a {{Roelcke-precompact Polish}} group that cannot act transitively
  on a complete metric space.
\newblock {\em Israel Journal of Mathematics}, 224(1):105--132, April 2018.

\bibitem[BYBM13]{benyaacovPolishTopometricGroups2013}
Ita{\"i} Ben~Yaacov, Alexander Berenstein, and Julien Melleray.
\newblock Polish topometric groups.
\newblock {\em Transactions of the American Mathematical Society},
  365(7):3877--3897, 2013.

\bibitem[BYT16]{benyaacovWeaklyAlmostPeriodic2016}
Ita{\"i} Ben~Yaacov and Todor Tsankov.
\newblock Weakly almost periodic functions, model-theoretic stability, and
  minimality of topological groups.
\newblock {\em Transactions of the American Mathematical Society},
  368(11):8267--8294, 2016.

\bibitem[DS11]{danilenkoErgodicTheoryNonsingular2011}
Alexandre~I. Danilenko and Cesar~E. Silva.
\newblock Ergodic {{Theory}}: {{Non-singular Transformations}}.
\newblock In Robert~A. Meyers, editor, {\em Mathematics of {{Complexity}}
  and~{{Dynamical Systems}}}, pages 329--356. {Springer}, {New York, NY}, 2011.

\bibitem[Gao09]{gaoInvariantDescriptiveSet2009}
Su~Gao.
\newblock {\em Invariant Descriptive Set Theory}, volume 293 of {\em Pure and
  {{Applied Mathematics}} ({{Boca Raton}})}.
\newblock {CRC Press, Boca Raton, FL}, 2009.

\bibitem[Kal85]{kallmanUniquenessResultsGroups1985}
Robert~R. Kallman.
\newblock Uniqueness results for groups of measure preserving transformations.
\newblock {\em Proceedings of the American Mathematical Society}, 95(1):87--90,
  1985.

\bibitem[Kec95]{kechrisClassicaldescriptiveset1995}
Alexander~S. Kechris.
\newblock {\em Classical Descriptive Set Theory}, volume 156 of {\em Graduate
  {{Texts}} in {{Mathematics}}}.
\newblock {Springer-Verlag, New York}, 1995.

\bibitem[Kec10]{kechrisGlobalaspectsergodic2010}
Alexander~S. Kechris.
\newblock {\em Global Aspects of Ergodic Group Actions}, volume 160 of {\em
  Mathematical {{Surveys}} and {{Monographs}}}.
\newblock {American Mathematical Society, Providence, RI}, 2010.

\bibitem[KR07]{kechrisTurbulenceAmalgamationGeneric2007}
Alexander~S. Kechris and Christian Rosendal.
\newblock Turbulence, amalgamation, and generic automorphisms of homogeneous
  structures.
\newblock {\em Proceedings of the London Mathematical Society}, 94(2):302--350,
  March 2007.

\bibitem[KT10]{kittrellTopologicalPropertiesFull2010}
John Kittrell and Todor Tsankov.
\newblock Topological properties of full groups.
\newblock {\em Ergodic Theory and Dynamical Systems}, 30(2):525--545, April
  2010.

\bibitem[Mal16]{malickiConsequencesExistenceAmple2016}
Maciej Malicki.
\newblock Consequences of the existence of ample generics and automorphism
  groups of homogeneous metric structures.
\newblock {\em The Journal of Symbolic Logic}, 81(3):876--886, September 2016.

\bibitem[Sab19]{sabokAutomaticContinuityIsometry2019}
Marcin Sabok.
\newblock Automatic continuity for isometry groups.
\newblock {\em Journal of the Institute of Mathematics of Jussieu},
  18(3):561--590, May 2019.

\end{thebibliography}

\end{document}